\newcommand{\abs}[1]{\lvert {#1} \rvert}
\newcommand{\fanc}{\mathscr{C}}
\newcommand{\deff}[1]{\textbf{#1}}
\newcommand{\N}{\mathbb{N}}
\newcommand{\F}{\mathbb{F}}
\DeclarePairedDelimiter\ceil{\lceil}{\rceil}
\DeclarePairedDelimiter\floor{\lfloor}{\rfloor}
\DeclareMathOperator{\exc}{X}
\DeclareMathOperator{\qc}{QC}
\DeclareMathOperator{\aff}{aff}
\DeclareMathOperator{\arank}{arank}
\DeclareMathOperator{\mult}{mult}
\newtheorem{theorem}{Theorem}[section]
\newtheorem{proposition}[theorem]{Proposition}
\newtheorem{lemma}[theorem]{Lemma}
\newtheorem{corollary}[theorem]{Corollary}
\theoremstyle{definition}
\newtheorem{definition}[theorem]{Definition}
\theoremstyle{remark}
\newtheorem{remark}[theorem]{Remark}
\newtheorem{example}[theorem]{Example}
\title[Caps in $AG(2,n)$]{How many cards should you lay out in a game of \textit{EvenQuads}: A detailed study of caps in $AG(n,2)$}
\author[Crager]{Julia Crager}
\address{Department of Mathematics, Bard College, Annandale-on-Hudson, New York 12504}
\email{jc9281@bard.edu}
\author[Flores]{Felicia Flores}
\address{Department of Mathematics, Bard College, Annandale-on-Hudson, New York 12504}
\email{ff0811@bard.edu}
\author[Goldberg]{Timothy E.~Goldberg}
\address{Department of Mathematics, Lenoir-Rhyne University, Hickory, North Carolina 28601}
\email{timothy.goldberg@lr.edu}
\author[Rose]{Lauren L.~Rose}
\address{Department of Mathematics, Bard College, Annandale-on-Hudson, New York 12504}
\email{rose@bard.edu}
\author[Rose-Levine]{Daniel Rose-Levine}
\address{Department of Mathematics, Bard College, Annandale-on-Hudson, New York 12504}
\email{dr6048@bard.edu}
\author[Thornburgh]{Darrion Thornburgh}
\address{Department of Mathematics, Bard College, Annandale-on-Hudson, New York 12504}
\email{dt9275@bard.edu}
\author[Walker]{Raphael Walker}
\address{Institut de Math\'ematique d'Orsay, Universit\'e Paris-Saclay, Orsay, France}
\email{raphael.walker@universite-paris-saclay.fr}
\date{\today}
\begin{document}

\begin{abstract}
    We define a \emph{cap} in the affine geometry $AG(n,2)$ to be a subset in which any collection of 4 points is in general position. In this paper we classify, up to affine equivalence, all caps in $AG(n,2)$ of size $k \leq 9$. As a result, we obtain a complete characterization of caps in dimension $n \leq 6$, in particular complete and maximal caps. Since the \textit{EvenQuads} card deck is a model for $AG(6,2)$, as a consequence we determine the probability that an arbitrary $k$-card layout contains a quad.
\end{abstract}

\maketitle

\tableofcontents

\section{Introduction}

%subsection{Caps in SET and Quads}

The card game Quads, initially called SuperSET, was introduced by Rose and Perreira in \cite{Rosepereira2013Super}. It is a pattern recognition game, similar to the popular card game SET$\textsuperscript{\textregistered}$. The mathematical underpinnings of Quads are broad and can be explored at a variety of levels of background and experience, using tools from combinatorics, probability, linear and affine algebra, abstract algebra, design theory, and finite geometry \cite{rose-chapter}.

% https://ksr-ugc.imgix.net/assets/030/679/995/d5db1033f4d8306236bf464eb32b25b4_original.png?ixlib=rb-4.0.2&w=680&fit=max&v=1600657615&gif-q=50&lossless=true&s=7177468972e0812381edf4e541df9e3c

\begin{figure}[htbp]
    \centering
    \includegraphics[width=10cm]{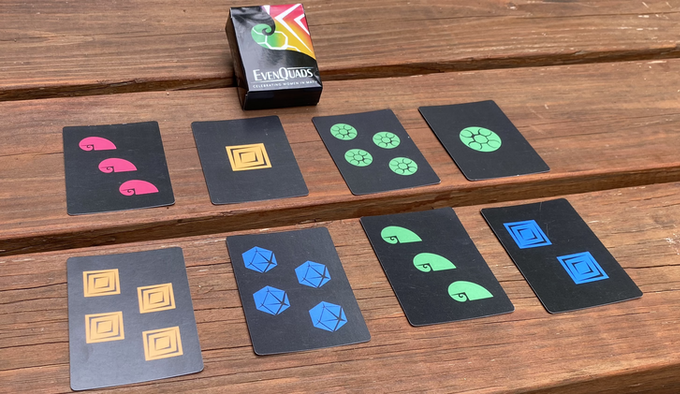}
    \caption{A Layout of EvenQuads cards. Photo by Denise Rangel Tracy.}
    \label{fig:EvenQuads}
\end{figure}

A Quads deck was published in 2021 by the Association for Women in Mathematics under the name \emph{EvenQuads} \cite{EvenQuads}, in honor of its 50th anniversary. See Figure~\ref{fig:EvenQuads}. 
In this deck, each symbol is the logo of a different mathematics society, and on the reverse side are profiles of prominent women mathematicians.
Eventually, there will be four different \emph{\textit{EvenQuads}} decks, which can be purchased on the website \href{https://store.awm-math.org/products/regular-edition-EvenQuads-notable-women-in-math-playing-cards}{https://store.awm-math.org/products/regular-edition-EvenQuads-notable-women-in-math-playing-cards}.
One can also play the game online using the \emph{Quads App} \cite{Roseapp}.

This paper serves as both a broad introduction to the mathematics behind \emph{Quads} and a detailed study of collections of cards that don't contain a quad. These sets are called \deff{$2$-caps} in \cite{Bennett-bounds} and \deff{Sidon Sets} in \cite{taitwon2021}.

Quad-free sets in Quads are analogous to \emph{caps} in the card game SET. A \emph{cap} or a \emph{cap set} in the affine geometry $AG(n,q)$ is defined to be a subset in which every collection of $q$ points is in general position. A SET deck is a model for the affine geometry $AG(4,3)$, and it can easily be generalized to $AG(n,3)$ by adding additional attributes. A SET in the game corresponds to a line in $AG(n,3)$, which consists of three collinear points, and thus a cap is a SET-free collection of cards.

Caps in $AG(n,3)$ have been well studied, and there is great interest in determining the maximal size of a cap in $AG(n,3)$. This is known as the \emph{Cap Set} problem. The maximal cap size is currently only known for $n \leq 6$ and is otherwise open. However, recently Ellenberg and Gijswijt \cite{ellenberg2016large} found a much improved upper bound on maximal cap sizes,  $(2.756)^n$, making use of a technique called the \emph{polynomial method} in a new way \cite{croot-progression-free}. 
See \cite{davis-maclagan} or \cite[Chapter 9]{Rosemcmahon2016joy} for a detailed introduction to the \emph{Cap Set} problem. This celebrated open problem both informed and inspired our work.

It turns out that a quads deck is a model for the affine geometry $AG(6,2)$, and it can easily be generalized to $AG(n,2)$. Because the only caps in $AG(n,2)$ are the singleton sets, for the purpose of studying quads we redefine a \emph{cap in $AG(n,2)$}to be a subset in which every collection of \emph{four} points is in general position. Thus, caps in $AG(n,2)$ will correspond to quad-frees set in a quads deck.

Although caps in $AG(n,2)$ have not been well studied, there are several important results in the literature. In \cite[Theorem 5.1]{taitwon2021}, Tait and Won found asymptotic bounds for the maximal size $M(n)$ of a cap in the affine geometry $AG(n,2)$:
\[
    \frac{1}{\sqrt{2}} \, 2^{n/2}
    \leq M(n)
    \leq 1 + \sqrt{2} \cdot 2^{n/2}.
\]
In \cite{RedmanRoseWalker}, Redman, Rose, and Walker constructed a small complete cap in $AG(n,2)$ for each $n$, providing an upper bound on the size of a minimal complete cap, A \emph{complete cap}, which is a cap such that the addition of any new element will create a quad, need not be of maximal size.

In this paper we classify all caps of size $k$ in $AG(n, 2)$ for $k \leq 9$, including complete and maximal caps, up to affine equivalence. We also provide structural theorems for caps of arbitrary size with at most one affine dependence. As an application of the classification of caps in small dimensions, we compute the probability that a $k$-card layout of cards contains a quad, providing an answer to the question in the title: 

\begin{quote}
\large{\textit{How many cards should you lay out
in a game of {EvenQuads}?}}
\end{quote}

\begin{figure}[htbp]
    \centering
    \includegraphics[width=10cm]{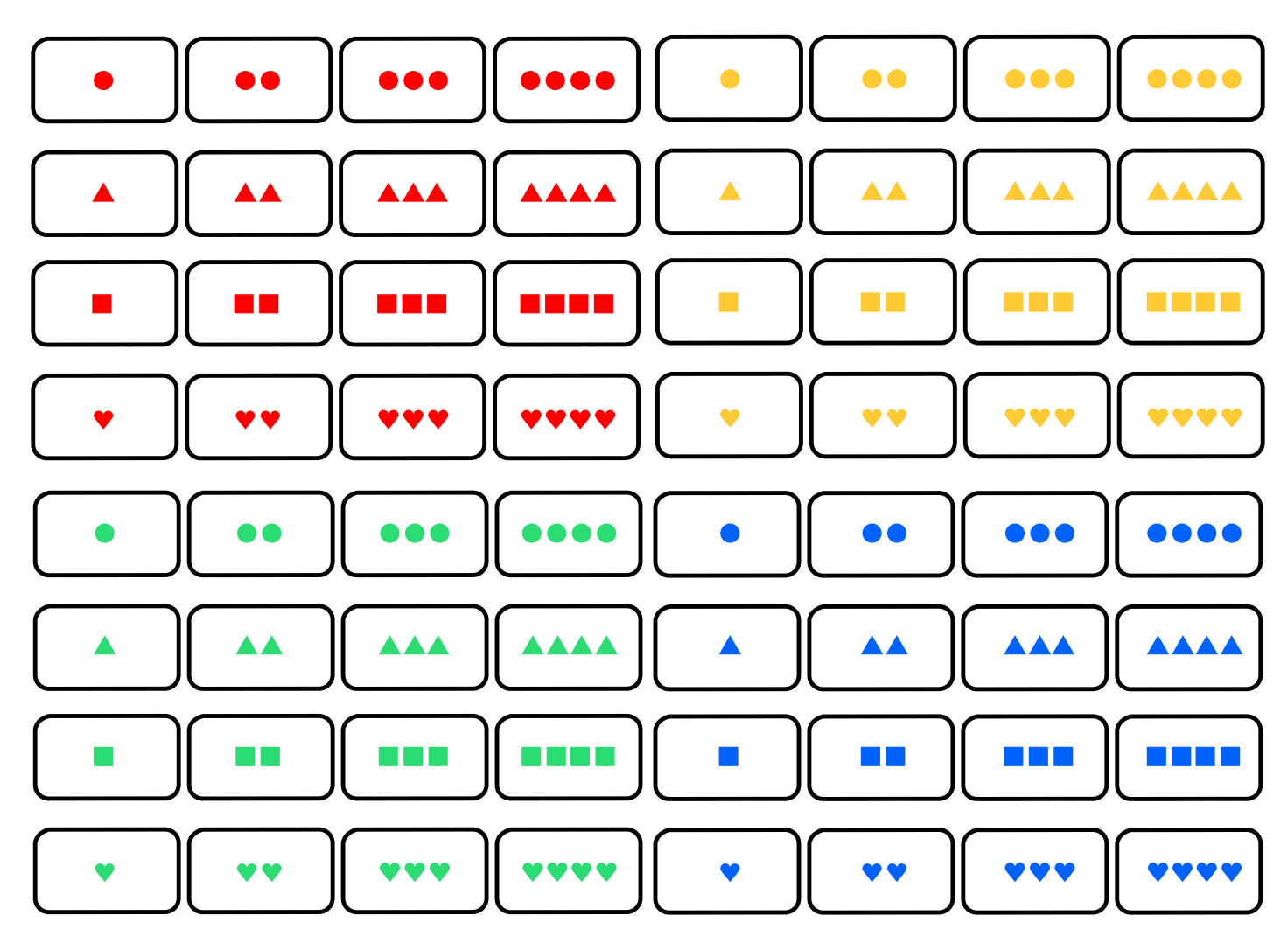}
    \caption{A Quads Deck. Design by Julia Crager.}
    \label{fig:quads-deck}
\end{figure}

\subsection{Quads Gameplay}

Quads is played with a special deck of cards, similar to SET. The deck we use in this paper is pictured in Figure~\ref{fig:quads-deck}. Each card has a design with three attributes --- number, color, and shape --- and each attribute can be in exactly one of \textbf{four} possible states, given in Table~\ref{tab:values}.

\begin{table}[ht]
    \centering
    \caption{The three attributes and their states in Quads.}
    \label{tab:values}
    \begin{tabular}{|l|l|}
        \hline
        \textbf{Attribute} & \textbf{States} \\
        \hline
        Number & 1, 2, 3, 4 \\
        \hline
        Color & Red, Green, Yellow, Blue \\
        \hline
        Shape & Squares, Triangles, Circles, Hearts \\
        \hline
    \end{tabular}
\end{table}

A standard Quads deck has 64 cards, one for each combination of states and attributes. The goal of the game is to find sets of 4 cards satisfying the {\emph Quad Conditions} given below. 
\bigskip

\begin{center}
\boxed{
\begin{minipage}{10cm}
    \textbf{Quad Conditions:} A set of 4 cards forms a quad if in each attribute one of the following holds.
    \begin{enumerate}
        \item The states are the same on each card.
        \item The states on each card are all different.
        \item Two different states occur, each on two cards.
    \end{enumerate}
\end{minipage}
}
\end{center}

\bigskip

\begin{example}
In Figure~\ref{fig:examples}, each of the first three horizontal sets is a quad. In the first set, the cards have the same number of objects (1), the same color (red), and different shapes. The second has different numbers, the same shape (circles), and two pairs each of two colors. The third has two pairs of shapes, and different numbers and colors. However, the fourth fails the Quad Conditions in the attribute shape, because two cards contain circles and the other two have different shapes. 
\end{example}

\begin{figure}[htbp]
    \centering
    \begin{tabular}{c}
        \includegraphics[width=7cm]{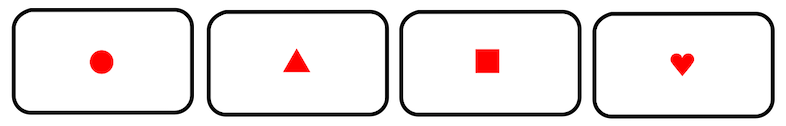} \\[2ex]
        \includegraphics[width=7cm]{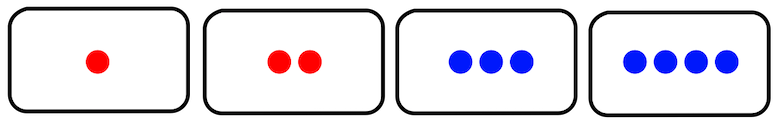} \\[2ex]
        \includegraphics[width=7cm]{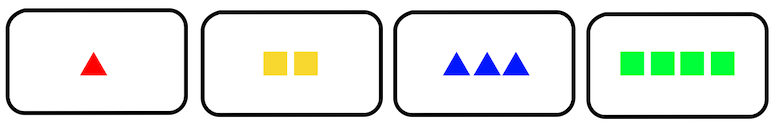} \\[2ex]
        \includegraphics[width=7cm]{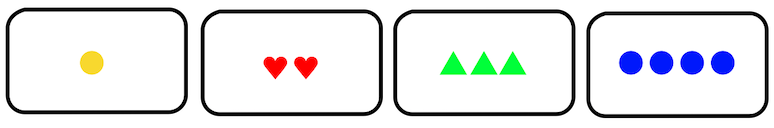}
    \end{tabular}
    \caption{Three quads and a non-quad.}
    \label{fig:examples}
\end{figure}

The first two Quad Conditions are identical to those for SET, but the third is required in order to make sure that any set of 3 cards can be completed uniquely to a quad.  To see this, consider the second quad in Figure~\ref{fig:examples}. The first three cards make it impossible to have all the same or all different colors. This third condition makes a Quads deck a \emph{Steiner Quadruple System}, analogous to a SET deck being a \emph{Steiner Triple System}.

Quads is played by placing a random layout of cards face up on a table. Players race to identify and pick up quads from these cards. Players collects quads into their own piles, and each time 4 new cards are added to the layout from the deck. This process continues until the deck is empty and no quads remain. If at any point the players all agree there is no quad in the layout, an additional card is added. The player who collects the most cards wins.

\subsection{Card layouts in SET and Quads}

A key detail of the rules is how many cards to lay out. The official rules for SET suggest an initial 12 card layout. In order to determine the this number, expected value computations can been used to determine the average number of SETs in a $k$-card layout, and computer trials can be used to approximate the probability of a SET in a random $k$-card layout, as the numbers are too large for brute force computions. \cite[Chapter 10]{Rosemcmahon2016joy}.

In the case of Quads, we are not only able to compute the exact probability of a quad in a $k$-card layout, but we can also derive these results using algebraic and combinatorial arguments that shed light on the underlying mathematical structures.

The main goal of this paper is to determine the affine equivalence classes of caps of size $k$ in $AG(n,2)$. This research was motivated by both the 
\emph{Cap Set} and the desire to determine how many cards to lay out in a game of Quads. For the latter problem, we begin by classifying caps up to affine equivalence, then count the number in each equivalence class, and use these results to compute the probability that a random $k$-card layout contains a quad. Our findings are summarized in Table~\ref{tab:quad-probs2}. 

\begin{table}[htbp]
\centering
\caption{The probabilities that $k$ {\textit{EvenQuads}} cards contain a quad.}
\label{tab:quad-probs2}
\[
\begin{array}{|c|c|}
    \hline
    k & \text{Probability of quad} \\
    \hline \hline
    1 & 0\% \\
    \hline
    2 & 0\% \\
    \hline
    3 & 0\% \\
    \hline
    4 & 1.64\% \\
    \hline
    5 & 8.20\% \\
    \hline
    6 & 23.76\% \\
    \hline
    7 & 49.78\% \\
    \hline
    8 & 79.04\% \\
    \hline
    9 & 96.39\% \\
    \hline
    10 & 100\% \\
    \hline
\end{array}
\]
\end{table}

Informed by this table, players may choose to lay out either 8, 9, or 10 cards, depending on their preferences. While 10 cards guarantee a quad, and with 9 cards there will be a quad most of the time, with 8 cards the challenge is either to find a quad or to show that none exists.  

Equivalence classes of maximal caps in $AG(n,3)$ have been studied, for example in \cite{max-cap-partitions}, but we haven't seen anything in the literature on non-maximal or incomplete caps.

\subsection{Overview}

In Section~\ref{sec:coordinates}, we describe in detail the correspondence between Quads cards and elements of $\mathbb{Z}_2^n$.

In Section~\ref{sec:caps}, we give precise definitions of caps and related notions, and introduce a useful online app called the \emph{Qap Visualizer}.

In Section~\ref{sec:affinegeo} we provide an introduction to affine geometry and the basic properties we will need in subsequent sections.

In Section~\ref{sec:geomprops}, we view quads and caps in terms of affine geometry, completely characterizing caps of sizes 1--7.

In Section~\ref{sec:equivalenceclasses}, we define equivalence classes of caps, prove two general theorems, and then give a complete characterization of caps of sizes 1--9. This covers all caps in $AG(6,2)$ and hence in an \textit{EvenQuads} deck.

In Section~\ref{sec:countingcaps}, we develop general formulas to count the number of caps of sizes 1--9 in each equivalence class. We then apply these formulas to caps in $AG(6,2)$, enabling us to compute the probability that a $k$-card layout in {\emph{\textit{EvenQuads}}} contains a quad.

\section{Coordinates for Quads} \label{sec:coordinates}

Just as the cards in a SET deck can be assigned coordinates in the finite vector space $\mathbb{Z}_3^4$ in a way compatible with the structure of the game, the same can be done with a Quads deck and $\mathbb{Z}_2^6$. 

\subsection{Quad cards as binary vectors}

As a first step, for each attribute we map the four states to the elements of the group $\mathbb{Z}_2 \times \mathbb{Z}_2$, which we denote as binary strings: $\{ 00, 01, 10, 11 \}$. The value assigned to each state is called its \deff{$(\mathbb{Z}_2 \times \mathbb{Z}_2)$-coordinate}. For example, we could use the assignments given in the following table.

\begin{table}[ht]
    \centering
    \caption{An assignment of $(\mathbb{Z}_2 \times \mathbb{Z}_2)$-coordinates to states of attributes of Quads cards.}
    \label{tab:coords}
    \begin{tabular}{|l|l|l|} 
    \hline
     \textbf{Attribute} & \textbf{States} & \textbf{$(\mathbb{Z}_2 \times \mathbb{Z}_2)$-Coordinates} \\ 
    \hline
    Number & $1,2,3,4$ & $00,01,10,11$ \\ 
    \hline
    Color & Green, Red, Blue, Yellow & $00,01,10,11$ \\
    \hline
    Shape & Heart, Square, Triangle, Circle & $00,01,10,11$ \\ 
    \hline
    \end{tabular}
\end{table}

In this way, each card in a Quads deck corresponds to an element of $(\mathbb{Z}_2 \times \mathbb{Z}_2)^3$. For example, the card four-green-triangles corresponds to the element $(11,00,10)$. Using the group isomorphism $(\mathbb{Z}_2 \times \mathbb{Z}_2)^3 \cong \mathbb{Z}_2^6$, each card corresponds to an element of the vector space $\mathbb{Z}_2^6$ over the field $\mathbb{Z}_2$, called the card's \deff{$\mathbb{Z}_2$-coordinates}, for example $(11,00,10) \rightarrow (1,1,0,0,1,0)$.  
This vector space, and hence the Quads deck, forms a model for the finite affine geometry $AG(6,2)$. For a generalized Quads deck with $n$ attributes, the cards can be put in one-to-one correspondence with elements of $(\mathbb{Z}_2 \times \mathbb{Z}_2)^n \cong \mathbb{Z}_2^{2n}$, which is a model for $AG(2n,2)$.

We will refer to the standard deck of $64$ cards, with three attributes, as \textbf{Quad-64}, and the generalized game with $n$ attributes as \textbf{Quad-$(4^n)$}. The crucial property of these coordinates is the following result, analogous to the corresponding property of $\mathbb{Z}_3$-coordinates for SET cards as in \cite[page 3]{davis-maclagan}.

\begin{theorem} \label{thm:fundamental}
Let $a,b,c,d \in \mathbb{Z}_2^6$ be distinct. Then $\{a,b,c,d\}$ is a quad if and only if $a+b+c+d=\vec{0}$.

\begin{proof}
    This follows from the fact that if $w,x,y,z \in \mathbb{Z}_2 \times \mathbb{Z}_2$, then $w+x+y+z=00$ if and only if one of the following is true.
    \begin{itemize}
        \item The elements $w,x,y,z$ are all equal.
        \item The elements $w,x,y,z$ are all distinct.
        \item The list of elements $w,x,y,z$ consists of two distinct elements repeated twice.
    \end{itemize}
\end{proof}
\end{theorem}

Recall that since $\mathbb{Z}_2^6$ is a vector space over $\mathbb{Z}_2$, for any $a \in \mathbb{Z}_2^n$ we have $a+a=\vec{0}$ and $a = -a$. The following corollary assures us that not only can three cards always be completed to a quad, but the fourth card is unique. 

\begin{corollary} \label{fundcor}
Let $a,b,c \in \mathbb{Z}_2^6$ be distinct. Then there is exactly one quad containing these elements, and the fourth element is $d = a+b+c$.

\begin{proof}
    Let $d = a+b+c$. First, note that $d$ cannot equal any of $a,b,c$, because if $d=a$, for example, then $d = d+b+c$, so $b+c=\vec{0}$, so $b=c$, which contradicts the assumption that $a,b,c$ are distinct. Hence $a,b,c,d$ are distinct elements of $\mathbb{Z}_2^n$.
    Now, note that $\{a,b,c,d\}$ is a quad by Theorem~\ref{thm:fundamental}, because
    \[
        a+b+c+d
        = a+b+c+(a+b+c)
        = (a+a)+(b+b)+(c+c)
        = \vec{0}.
    \]
    Finally, suppose $d' \in D$ and $\{a,b,c,d'\}$ is a quad. Then by Theorem~\ref{thm:fundamental} we know that $a+b+c+d'=\vec{0}$. Adding $d'$ to both sides yields $a+b+c = d'$. Therefore  $d $ is unique, and exactly one quad contains $a,b,c$.
\end{proof}
\end{corollary}

The next corollary follows immediately from Corollary~\ref{fundcor}, and provides a useful way for computing the sum of two cards without having to recall the entire coordinate assignment --- just which card corresponds to the zero vector. Since the sum of an element with itself is $\vec{0}$ and the sum of any element with $\vec{0}$ is that element, we need only consider the sum of distinct nonzero elements.

\begin{corollary} \label{easy_addition}
Let $a,b \in \mathbb{Z}_2^6$ be distinct nonzero vectors. Then $a+b$ is the unique card that makes a quad with $\vec{0}$, $a$, and $b$.
\end{corollary}

It is clear from the proofs that Theorem~\ref{thm:fundamental} and Corollaries~\ref{fundcor} and~\ref{easy_addition} are true in a deck with $n$ attributes. In fact, one can also play \textbf{Quad-$2^n$} by using what we call \emph{half-attributes}. For example, if we consider the Quad-64 cards containing red or blue symbols, shown in Figure~\ref{fig:quad32}, these 32 cards can be viewed as vectors in $\mathbb{Z}_2^5$. It's easy to see that this set is closed under taking quads, i.e.~for any three cards, the fourth card that completes a quad is still in the set.

\begin{figure}[htbp]
    \centering
    \includegraphics[width=10cm]{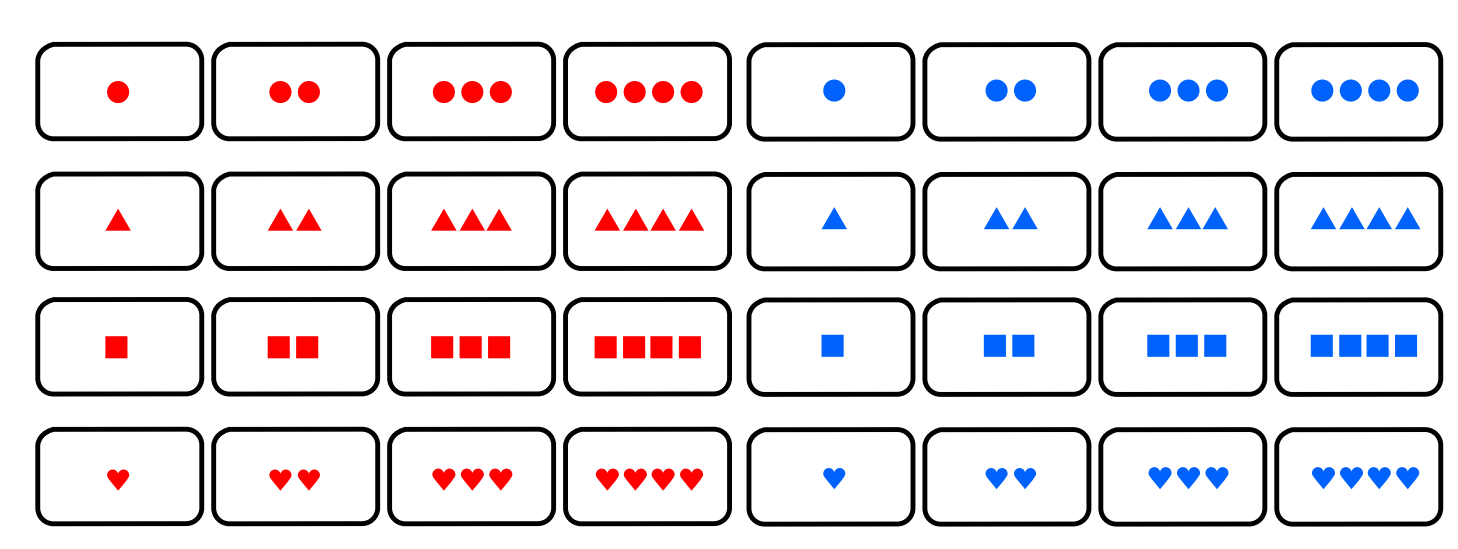}
    \caption{A possible Quad-32 deck, which can be represented by $\mathbb{Z}_2^5$.}
    \label{fig:quad32}
\end{figure}

\subsection{Visualizing coordinates in Quads}

Viewed as elements of the Euclidean space $\mathbb{R}^6$, the vectors in $\mathbb{Z}_2^6$ are the vertices of a $6$-dimensional hypercube.  However, for our purposes it is more convenient to visualize elements of $\mathbb{Z}_2^6$ and their coordinates in a planar fashion, in an $8 \times 8$ grid. We assign coordinates to the 64 squares in the grid using a method we call \deff{recursive coordinates}. To begin with, we label a $2 \times 2$ grid with elements of $\mathbb{Z}_2 \times \mathbb{Z}_2$ as as in Figure~\ref{grid-coords}. 

\begin{figure}[ht]
    \centering
    \[
    \begin{array}{|c|c|}
        \hline
        00 & 01 \\
        \hline
        10 & 11 \\
        \hline
    \end{array}
    \]
    \caption{Grid layout of the elements of $\mathbb{Z}_2 \times \mathbb{Z}_2$.}
        \label{grid-coords}
\end{figure}

To build coordinates in the full $8 \times 8$ grid, first divide this into four $4 \times 4$ subgrids and assign an element of $\mathbb{Z}_2 \times \mathbb{Z}_2$ to each using the scheme from Figure~\ref{grid-coords}. This will be the first two entries of the $\mathbb{Z}_2$-coordinates of any square contained in that $4 \times 4$ grid. For example, the $\mathbb{Z}_2$-coordinates of a square in the upper-right $4 \times 4$ grid has coordinates of the form $(0,1,\ldots)$.

Then divide each $4 \times 4$ grid into four $2 \times 2$ grids, each of which is assigned an element of $\mathbb{Z}_2 \times \mathbb{Z}_2$ in the same way. These assignments determine the third and fourth entries of the $Z_2$-coordinates. Finally, each square in a $2 \times 2$ grid is assigned an element of $\mathbb{Z}_2 \times \mathbb{Z}_2$, which determines the last two entries of its $Z_2$-coordinates. We now give an example of this.

\begin{example} the coordinates of the marked box in Figure~\ref{fig:coords-example},  are $(1,1,0,0,1,0)$, because it is in the bottom right $4 \times 4$ grid ($11$), within this the upper left $2 \times 2$ grid($00$), and within this the upper right square ($01$).
\end{example}

\begin{figure}[htbp]
    \centering
    \includegraphics[width=6cm]{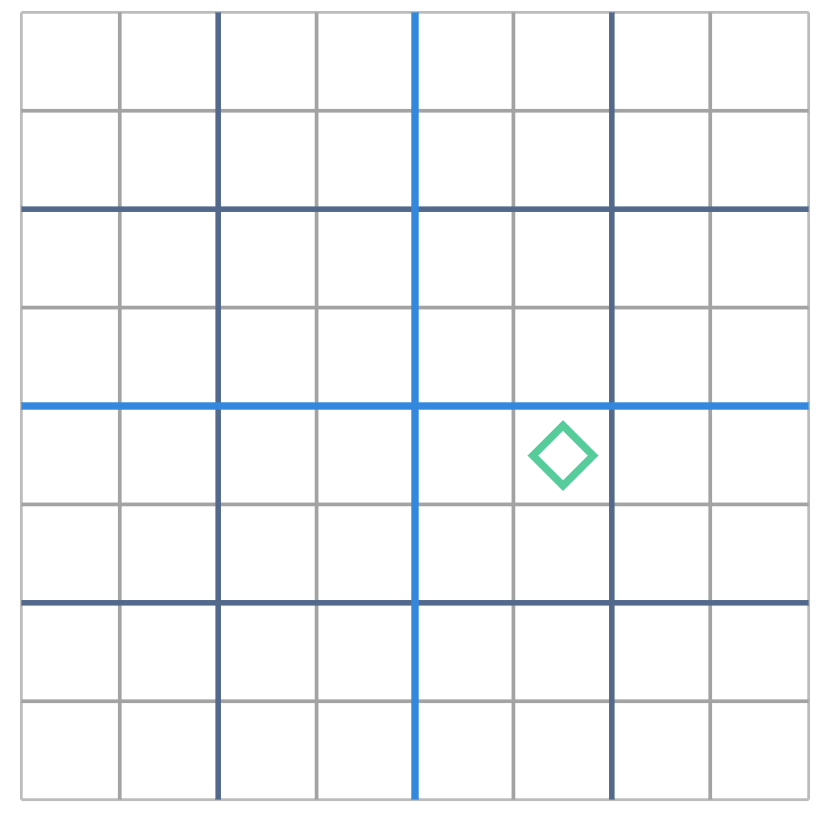}
    \caption{The point in $\mathbb{Z}_2^6$ with recursive coordinates $(1,1,0,0,0,1)$.}
    \label{fig:coords-example}
\end{figure}

In this way, we can place the $64$ elements of $\mathbb{Z}_2^6$ in the grid according to their $\mathbb{Z}_2$-coordinates. In practice, we can restrict our attention to squares in the grid rather than Quad cards, since the fundamental properties of the cards are completely captured by their coordinates.

The same method can be used to form planar diagrams of a deck of Quads cards with $n$ attributes, using a $2^n \times 2^n$ grid.

\section[Caps in AG(n,2)]{Caps in $AG(n,2)$} \label{sec:caps}

In this section, we define basic terms and explore properties of caps using  $\mathbb{Z}_2^n$ as a model for $AG(n,2)$. We also introduce the \emph{Qap Visualizer}, an online tool for viewing, constructing, and analyzing caps. 

\subsection{Cap definitions and properties}

We now define the notions of quad completion and quad closure.

\begin{definition} Let $S$ be a subset of $\mathbb{Z}_2^n$.
\begin{enumerate}
    \item Let $a,b,c \in \mathbb{Z}_2^n$ be distinct. The \deff{quad completion} of $a,b,c$ is $d = a+b+c$. 
    \item The \deff{exclude set} of $S$, denoted $\exc(S)$, is defined by 
    \[
        \exc(S) = \{ a+b+c \mid \text{$a,b,c \in S$ are distinct} \}.
    \]
    An element of $\exc(S)$ is called an \deff{exclude point} of $S$.

    \item The \deff{quad closure} of $S$ is the union of $S$ with its exclude set, $\qc(S) = S \cup \exc(S)$.

    \item An element $p \in \exc(S)$ has \deff{multiplicity} $m$, and is called an \deff{$m$-point} of $S$, if there are exactly $m$ distinct (unordered) triples of elements of $S$,
    \[
        \{x_i, y_i, z_i\} \subseteq S \qquad \text{for $i=1,\ldots,m$},
    \]
    such that $x_i+y_i+z_i=p$ for each $i$.
    
    \item The set $S$ is a \deff{cap} if $S$ does not contain a quad, i.e.~for any distinct $a,b,c,d \in S$, $a+b+c+d \neq \vec{0}$. 
    A cap with $k$ elements is called a \deff{$k$-cap}.
    
    \item A cap $S$ is \deff{complete} if $\exc(S) = \mathbb{Z}_2^n - S$.
    
    \item A cap is \deff{maximal} if it is of maximal size in $\mathbb{Z}_2^n$.
    
\end{enumerate}
\end{definition}

The concept of a complete or maximal cap generalizes easily to the situation where $\mathbb{Z}_2^n$ is replaced by any $r$-dimensional affine subspace of $\mathbb{Z}_2^n$. This will be made explicit in the next section.

We state some useful properties of caps.

\begin{proposition} \label{basic-props}
Let $S$ be a subset of $\mathbb{Z}_2^n$.
\begin{enumerate}
    \item If $S$ has fewer than four elements, it is a cap.
    \item $S$ is a cap if and only if $S \cap \exc(S) = \emptyset$.
    \item $S$ is a complete cap if and only if $\qc(S) = \mathbb{Z}_2^n$.
    \item Maximal caps are complete.
    \item If all $k$-caps in dimension $n$ are complete, then they are maximal.
    \item \label{sum-of-types} If $S$ is a $k$-cap, then
    \[
        \sum_{p \in \exc(S)} \mult(p) = \binom{k}{3}.
    \]
\end{enumerate}

\begin{proof}
Statements 1--4 follow directly from the definitions. 

For statement 5, suppose not all $k$-caps are maximal. This means that there exists a maximal $m$-cap $C'$, where $m > k$. Now remove points until you have a $k$-cap. This cap will not be complete, since you can add points and still have a cap, contradicting our assumption that all $k$-caps are complete. 

For statement 6, suppose $S$ is a $k$-cap. Let $\mathcal{T}$ be the set of all $3$-element subsets of $S$. We can map each triple $\{a,b,c\} \in \mathcal{T}$ to the element $p = a+b+c \in \exc(S)$, and for each $p \in \exc(S)$ the multiplicity of $p$ is exactly the number of triples in $\mathcal{T}$ that map to $p$. It follows that
\[
    \sum_{p \in \exc(S)} \mult(p)
    = \lvert \mathcal{T} \rvert
    = \binom{k}{3}.
\]
\end{proof}
\end{proposition}

Statement 2 of Proposition~\ref{basic-props} says that if we want to enlarge a cap $S$, we cannot use any elements from $\exc(S)$. i.e.~points in $\exc(S)$ must be \textbf{excluded} from any expanded cap.

We have a useful fact about $m$-points. Not only must the $m$ triples that determine the point be distinct, but they cannot have any overlap.

\begin{proposition} \label{disjoint-triples}
Let $C \subseteq \mathbb{Z}_2^n$ be a cap, and let $p$ be an $m$-point of $C$ with $m \geq 2$. Then any two triples $\{a, b , c\}, \{d, e, f\} \subseteq C$ that sum to $p$ must be disjoint.

\begin{proof}
Let $\{a,b,c\}, \{d,e,f\} \subseteq C$ be distinct triples such that $a + b + c = d + e + f$. If the triples overlap in one point, say $a = d$, then $a + b + c = a + e + f$, so $b + c + d + e = \vec{0}$, so these four points form a quad, contradicting the fact that they come from a cap. If they overlap in two points, say $a = d$ and $b = e$, then $a + b + c = a + b + f$, so $c = f$, contradicting the assumption that triples are distinct. Hence the triples must be disjoint.
\end{proof}
\end{proposition}

Proposition~\ref{disjoint-triples} provides a connection between $m$-points and $k$-caps.

\begin{corollary}
\label{cor:s-geq-3k}
Let $C \subseteq \mathbb{Z}_2^n$ be a $k$-cap. If $\exc(C)$ contains an $m$-point, then $k \geq 3m$.

\begin{proof}
Suppose $p \in \exc(C)$ is an $m$-point. Then by definition there are exactly $m$ distinct triples $\{x_i,y_i,z_i\} \subseteq C$, $i = 1, \ldots, m$ for which $x_i + y_i + z_i = p$. By Corollary~\ref{disjoint-triples}, these triples are pairwise disjoint, which means the $3m$ elements
\[
    x_1, y_1, z_1, \ldots, x_k, y_k, z_k \in C
\]
are distinct. Therefore $C$ contains at least $3m$ elements.
\end{proof}
\end{corollary}

Corollary~\ref{cor:s-geq-3k} implies that the smallest cap whose exclude set contains a $2$-point is a $6$-cap. In Figure~\ref{fig:6-cap} we give an example of a $6$-cap in $\mathbb{Z}_2^4$ (denoted by green diamonds). Points marked with a  {\color{red} {2}} are $2$-points, because they can be written in exactly two different ways as the sum of three cap points.

\begin{figure}[htbp]
    \centering
    \includegraphics[width=0.3\textwidth]{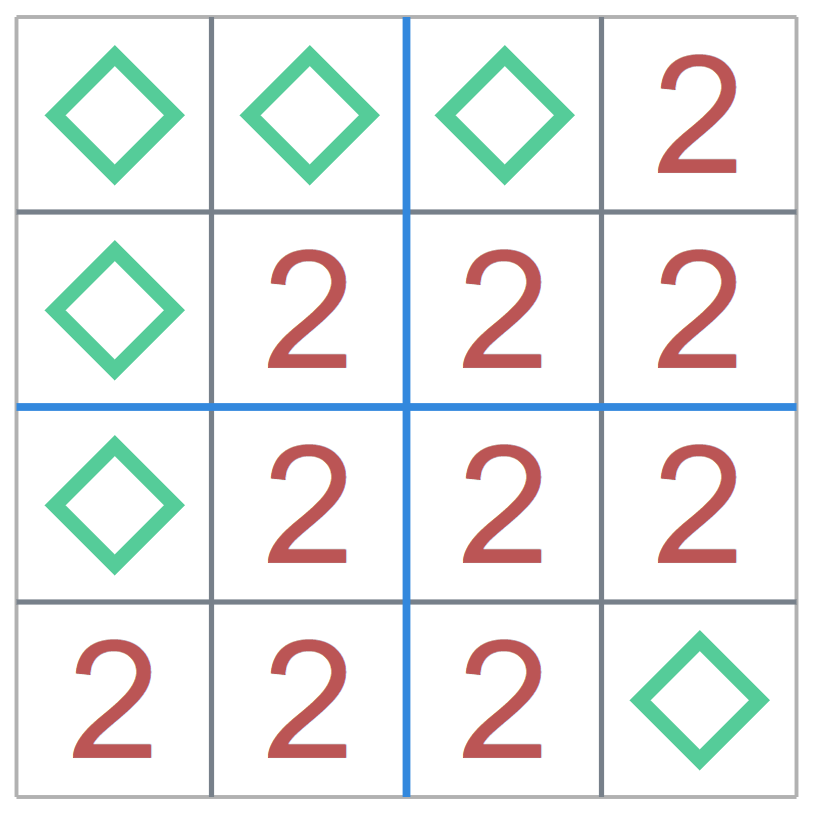}
    \caption{A 6-cap in $\mathbb{Z}_2^4$. Points with a red {\color{red}{2}} are $2$-points.}
    \label{fig:6-cap}
\end{figure}

\subsection{The Qap Visualizer}

A very useful tool for analyzing caps is the \textbf{Qap Visualizer} \cite{QapVis}, which was based on the Cap Builder for SET \cite{SETapp}. The term \deff{Qap} was used to distinguish from the standard caps in SET.

In this web-based app, a rectangular grid of squares represents the elements of $\mathbb{Z}_2^n$, in a square for even $n$ and in a half-square for odd $n$. The user selects one or more squares to build a cap, which are marked with green diamonds. With each new chosen square, the program determines which elements are in the exclude set and marks each with a red number denoting the multiplicity of the element. If the user hovers over a number, the triples that exclude this point are highlighted.
The user cannot select an element marked with a number, but choosing any empty square will create a larger cap. The user can also deselect an element by clicking on it again. 

We give an example in Figure~\ref{fig:qap-finder}. 
Observe that the quad closure of the cap consists of the green diamonds that denote cap points, together with the red numbers that denote the exclude points.

\begin{figure}[htbp]
    \centering
    \includegraphics[width=10cm]{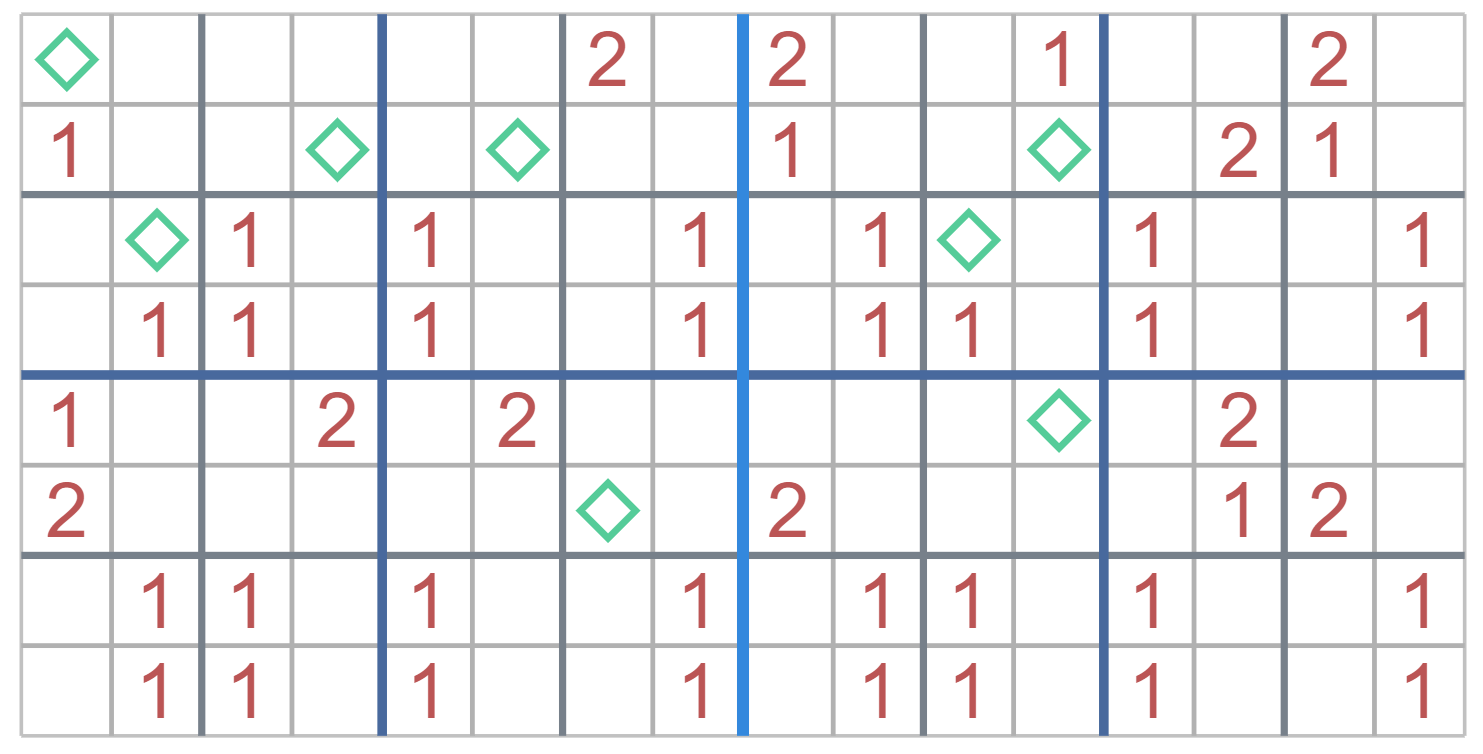}
    \caption{An $8$-cap and its exclude set in $\mathbb{Z}_2^7$, in the Qap Visualizer.}
    \label{fig:qap-finder}
\end{figure}

\begin{remark} 
\begin{enumerate}
    \item  The quad closure of a subset need not be closed under finding quads. In Figure~\ref{fig:bad-qc}, the elements
    \[
        (11,00,00), (11,00,11), (01,11,00), (01,11,11) \in \mathbb{Z}_2^6,
    \]
   circled in yellow, form a quad. The first three elements are in the exclude set and hence also the quad closure, but the fourth is outside the quad closure. However, we prove in Proposition~\ref{prop:repeated} later in this paper that repeatedly applying the quad closure to a set does eventually yield a set closed under taking quads.
    \item Maximal caps are complete, but there exist complete caps that are not maximal. For example, there exist complete caps of sizes 8 and 9 in $\mathbb{Z}_2^6$, but only the cap of size 9 is maximal. These are shown in Figure~\ref{fig:non-max-complete-caps}.
\end{enumerate}
\end{remark}

\begin{figure}[htbp]
    \centering
    \includegraphics[width=6cm]{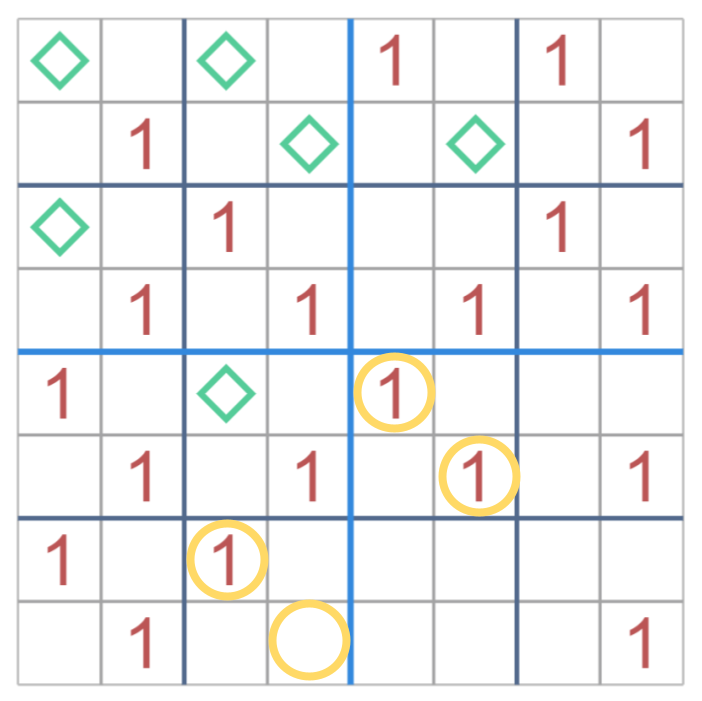}
    \caption{A $6$-cap in $\mathbb{Z}_2^6$ where the quad closure is not quad-closed.}
    \label{fig:bad-qc}
\end{figure}

\begin{figure}[htbp]
    \centering
    \includegraphics[width=0.45\textwidth]{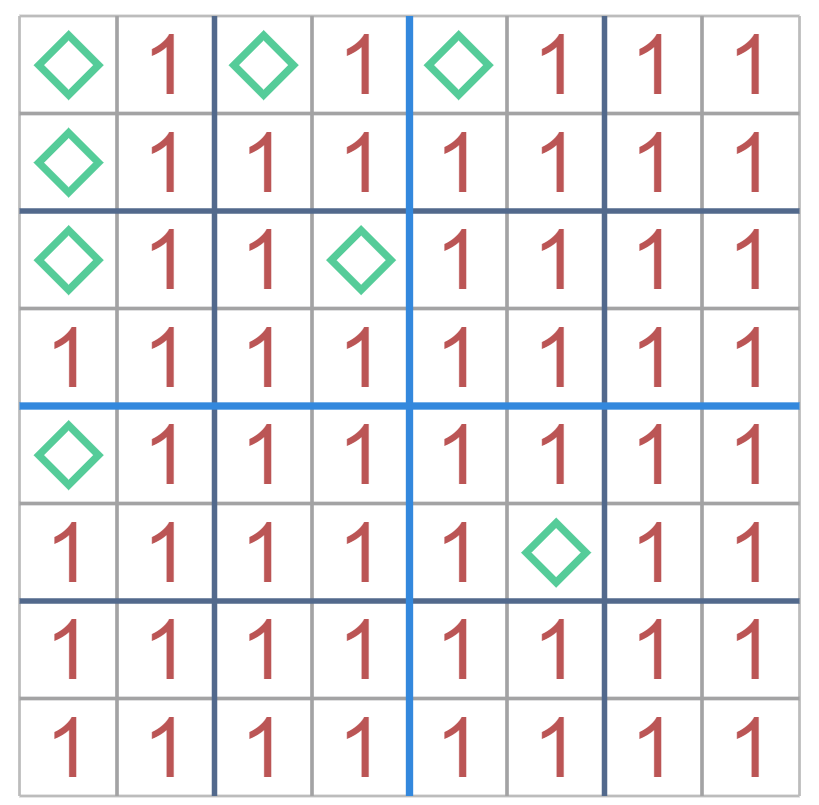}
    \includegraphics[width=0.45\textwidth]{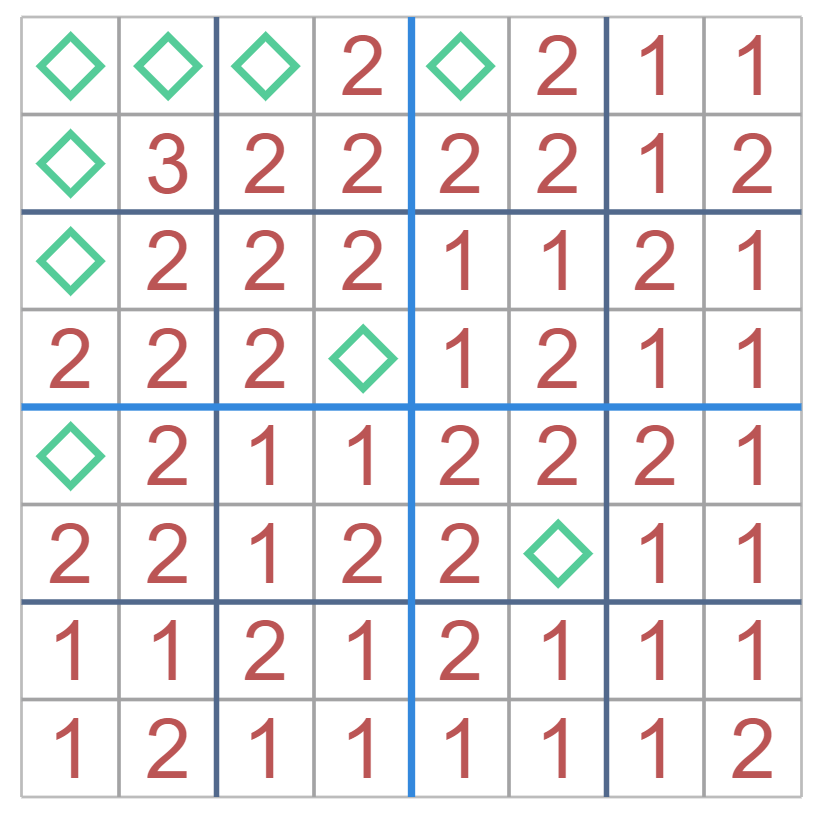}
    \caption{Complete caps of sizes 8 and 9 in $\mathbb{Z}_2^6$.}
    \label{fig:non-max-complete-caps}
\end{figure}

\section{Affine Geometry}
\label{sec:affinegeo}

Although we can study $AG(n,2)$ using the axioms of finite geometry, we find it more useful to work in $\mathbb{Z}_2^n$, which allows us to view affine lines, planes, etc.~as translates (cosets) of linear subspaces of a vector space. In particular, we can adapt well known results in linear algebra to study caps and flats in $AG(n,2)$ viewed as subsets of $\mathbb{Z}_2^n$.

We first recall some basic definitions and results from affine geometry. A detailed treatment can be found in \cite{audin}, among others. We close the section by highlighting specific properties of the finite affine geometry $AG(n,2)$.

\subsection{Preliminary definitions}
We recall basic definitions about affine linear algebra over an arbitrary field $K$. The main takeaways are that (1) any \emph{flat} (translate of a linear subspace) can be partitioned into subflats of a given size, and (2) affine transformations preserve affine combinations.

\begin{definition}
Let $V$ be a finite-dimensional vector space over a field $K$, and let $S = \{x_1, \ldots, x_n \} \subseteq V$.

\begin{enumerate}
    \item An \deff{affine combination} of $S$ is a linear combination
    \[
        \alpha_1 x_1 + \cdots + \alpha_n x_n
    \]
    where $\alpha_1, \ldots, \alpha_n \in K$ satisfy $\alpha_1 + \cdots + \alpha_n = 1$.
    
    \item An \deff{affine dependence} of $S$ is a linear combination
    \[
        \alpha_1 x_1 + \cdots + \alpha_n x_n = \vec{0},
    \]
    where $\alpha_1 + \cdots \alpha_n = 0$ and $\alpha_1, \ldots, \alpha_n$ are not all zero.

    \item The \deff{affine span} of a subset $S \subseteq V$ is the set $\aff(S)$ of all affine combinations of finitely many elements of $S$.

    \item $S$ is \deff{affinely dependent} if some element of $S$ is in the affine span of the other elements. Otherwise $S$ is \deff{affinely independent}.

    \item An \deff{$r$-dimensional affine subspace} $F$ of $V$, called an \deff{$r$-flat}, is defined to be the affine span of $r+1$ affinely independent elements of $V$, or equivalently, the translate $L+v$ of an \deff{$r$-dimensional linear subspace} $L$ of $V$. 

    \item An \deff{affine basis} for an $r$-flat $F \subseteq V$ is a set of affinely independent elements of $V$ whose affine span is $F$. Equivalently, if $F = L + v$, then an affine basis for $F$ is given by $\{ x + v \mid x \in B \cup \vec{0}\}$, where $B$ is a linear basis for $L$.

    \item An \deff{affine transformation} between vector spaces $V$ and $W$ over $K$ is a function $A : V \to W$ of the form $A(x) = M(x) + y$ for all $x \in V$, where $M : V \to W$ is a linear transformation and $y \in W$ is a fixed vector. 

    \item An affine transformation $A$ is an \deff{affine isomorphism}, or \deff{affine equivalence}, if $A$ is invertible. In this case, we say that $V$ and $W$ are \deff{affinely equivalent}, denoted $V \cong W$. 

    \item Subsets $S, T \subseteq V$ are \deff{affinely equivalent}, denoted $S \cong T$, if $A(S) = T$ for some affine isomorphism $A: V \to V$.

\end{enumerate}
\end{definition}

\begin{remark} The following results are standard and follow from the definitions above.

\begin{enumerate}

    \item Any affine basis for an $r$-flat contains $r+1$ elements, and every element of a flat can be written uniquely as an affine combination of affine basis elements.
    
    \item Two affinely independent sets of the same size are affinely equivalent.

    \item An affine $r$-flat $F = L + v$ is a coset of $L$, and $v$ can be any element of $F$. Thus, a partition of $V$ into the cosets of $L$ is also a partition into $r$-flats, where $L$ is the r-flat containing $\vec{0}$.
    
    \item When $F$ is a field with $q$ elements, an $r$-flat will contain $q^r$ elements.

    \item An affine transformation $A : V \to W$ \textbf{preserves affine combinations} in the following sense. For any $x_1, \ldots, x_n \in V$ and affine combination $\alpha_1 x_1 + \cdots + \alpha_n x_n$, where $\alpha_1, \ldots, \alpha_n \in F$ sum to $1$, we have
    \[
        A(\alpha_1 x_1 + \cdots + \alpha_n x_n)
        = \alpha_1 A(x_1) + \cdots + \alpha_n A(x_n).
    \]
    Consequently, affine transformations preserve affine independence and dependence.

\end{enumerate}
\end{remark}

Because we will rarely use any kind of span other than an affine one, we will drop the affine description. So if $\aff(S) = F$, we will say that \deff{$S$ spans $F$}. 
%Additionally, if a set of elements are affinely independent, we will will just call them \emph{independent}.

\subsection{Properties of Flats in AG(n,2)}

Affine linear algebra over the field $\mathbb{Z}_2$ has a particularly nice structure. Although several of the results below are true over any finite field, most make use of the fact that in vector spaces over $\mathbb{Z}_2$, the only scalars are 0 and 1. 

\begin{lemma} \label{rem:odd-sums}
Let $S = \{x_1, \ldots, x_m\} \subseteq \mathbb{Z}_2^n$. 
\begin{enumerate}
    \item An affine combination of $S$ is a sum of an odd number of elements of $S$.
    \item $S$ is affinely dependent if and only if a sum of an even number of elements of $S$ equals $\vec{0}$.
    \item Affine transformations of $\mathbb{Z}_2^n$ preserve addition of an odd number of elements.
    \item An $r$-flat $F$ containing a $t$-flat $F'$ can be partitioned uniquely into $2^{r-t}$ pairwise disjoint $t$-flats such that one of them is $F'$.
    \item Let $x$ and $y$ be points in an $r$-flat $F = L+v$, where $L$ is a linear subspace of dimension $r$. Then $x+y \in L$. 
\end{enumerate}

\begin{proof} \leavevmode
\begin{enumerate}
    \item Since the coefficients of an affine combination lie in $\{0,1\}$, the sum of the coefficients equals $1$ if and only if the number of elements is odd.
    \item A dependence means that some $x_i \in S$ is an affine combination of other elements of $S$, hence the sum of an odd number of them. Without loss of generality, assume $x_1 = x_2 + \ldots + x_{2t}$. Moving $x_1$ to the other side, and since $x_1 = - x_1$, we get that $x_1 + x_2 + \ldots + x_{2t} = \vec{0}$. 
    \item This follows directly from the fact that affine transformations preserve affine combinations.
    \item Let $F$ be an $r$-flat containing a $t$-flat $F'$, and let $v \in F'$. Then $F = L + v$ for some $r$-dimensional linear subspace $L$, and since $v = -v$ we can write $L = F+v$. Then $F' + v = L'$ is a linear subspace of $L$ with dimension $t$, and $F' = L' + v$. Since $\abs{L} = 2^r$ and $\abs{L'} = 2^t$, we can partition $L$ into $2^{r-t}$ cosets with respect to $L'$, say $ C_1, \ldots C_{r-t}$, one of which is $L'$. If we translate each of these cosets by $v$, the resulting sets $ C_1+v, \ldots C_{r-t}+v$ are $t$-flats partitioning $F$, and one of these is $L' + v = F'$.
    \item Let $x, y \in F = L+v$. Then $x = a + v$ and $y = b + v$ for some $a,b \in L$. Hence $x + y = (a + v) + (b + v) = a + b \in L$, since $v + v = \vec{0}$ and $L$ is closed under addition.
\end{enumerate}
\end{proof}
\end{lemma}

\begin{example}
We will find it useful to partition an $r$-flat into $4$-flats, especially because this is how they are displayed in the Qap Visualizer. For example, in Figure~\ref{fig:coords-example} the grid gives a partition of $\mathbb{Z}_2^6$ into four 4-flats.
\end{example}

\section{Classifying Caps}
\label{sec:geomprops}

In this section, we use tools from affine linear algebra to study quads and caps. We then use this to characterize small caps of at most seven points in terms of their geometric, algebraic, and combinatorial properties. In the next section, we will introduce new techniques to characterize caps of size greater than seven. 

Our primary goal is to determine the structure of $k$-caps in dimension $n$. This includes the following:

\begin{enumerate}
    \item Determine the dependence relations among elements in a $k$-cap.
    \item Determine the smallest dimension of a flat that admits a $k$-cap.
    \item Determine the number of affine equivalence classes of $k$-caps in each dimension.
    \item Determine the maximal size of a cap in an $r$-flat.
\end{enumerate}

\bigskip

The following theorem is a compilation of our main findings about caps of sizes up to 9. All of these results are described and proved thoroughly in subsequent sections.

\begin{theorem}[Summary of Cap Characterizations]\leavevmode
\begin{enumerate}
    \item Caps of size $k \leq 4$ are affinely independent, and maximal in dimension $k-1$.
    \item All $5$-caps are affinely independent.
    \item A $6$-cap is either affinely independent, or has a $6$-point dependence relation. In the latter case it is maximal in dimension $4$.
    \item A $7$-cap is either affinely independent or has a $6$-point dependence relation. In the latter case it is maximal in dimension $5$.
    \item An $8$-cap is either affinely independent, contains a $6$-point dependence relation, or contains an $8$-point dependence. In the third case it is complete, but not maximal, in dimension $6$. 
    \item A $9$-cap is either affinely independent, has an $8$-point dependence relation, or has two $6$-point dependence relations. In the last case it is maximal in dimension $6$.
\end{enumerate}
\end{theorem}
   
\begin{remark}
    For each $k$, each different case represents a single affine equivalence class of $k$-caps, so that when $k \leq 5$ there is only one equivalence class, for $k=6$  and $k=7$ there are two classes each, for $k=8$ there are three classes, and for $k=9$ there are four classes. 
  
    In what follows, we also determine the \emph{dimension} of each equivalence class, i.e.~the smallest dimension that can contain a cap in that equivalence class. For example, of the three types of equivalence classes of $8$-caps, two types can occur in dimension $6$, while the other type requires dimension $7$.
\end{remark}
  
It is of great general interest to determine $M(r)$, the size of a maximal cap in each dimension $r$. The results listed above and proved in this paper show that in dimensions 1--6, the values of $M(r)$ are $2, 3, 4, 6, 7, 9$, respectively. We have also computed the following values, proofs of which will appear in a forthcoming paper. (Here $r_k$ denotes the smallest possible dimension of a $k$-cap.)
\begin{itemize}
      \item $M(7) = 12$ and $M(8) = 18$.
      \item $r_{11} = r_{12} = 7$, 
      $r_{13} = \cdots = r_{18} = 8$, and
     $r_{19} = \cdots = r_{23} = 9$.
\end{itemize}

\subsection{Geometric descriptions of quads and caps} \label{subsec-geom}

We can now describe properties of quads and caps in the context of affine geometry.

\begin{proposition} \label{thm:caps-are-planes}
Quads in $AG(n,2)$ are exactly the $2$-flats, or affine planes.

\begin{proof}
Let $\{a,b,c,d\}$ be a quad. Then $a+b+c+d=\vec{0}$, so $d = a+b+c$ is an affine combination of the three elements $a,b,c$. Since the only affine combinations of $a,b,c$ are sums of an odd number of these elements, it follows that $\aff \{a,b,c\} = \{a,b,c,d\}$, so the quad is a flat with $4 = 2^2$ elements, hence it is a $2$-flat.

Now let $F$ be a $2$-flat. Then $F = \aff \{x,y,z\}$ for some affinely independent elements $x,y,z$, and by item 1 of Lemma~\ref{rem:odd-sums} we know $\aff \{x,y,z\} = \{x,y,z,x+y+z\}$. Since $x,y,z$ are independent, we know that $x+y+z$ is distinct fourth element. Summing these elements yields
\[
    x + y + z + (x+y+z) = (x+x) + (y+y) + (z+z) = \vec{0},
\]
so $F = \{x,y,z,x+y+z\}$ is a quad.
\end{proof}
\end{proposition}

The following lemma will be used repeatedly in the remainder of this paper.

\begin{lemma} \label{lem:5-or-more}
Let $C$ be a cap, and suppose $y \in C$ is also in the span of $C - \{y\}$. Then $y$ is the sum of $2m+1$ elements of $C' = C - \{y\}$ where $2m+1 \geq 5$.

\begin{proof}
Suppose $y$ is an affine combination of $x_1, \ldots, x_n \in C'$. Recall that affine combinations of $x_1, \ldots, x_n$ are sums of an odd number of elements. Since $y \notin C'$, it cannot be a sum of one element of $C'$. If $y$ is the sum of three distinct elements of $C'$, say $y = x_i + x_j + x_k$, then we have $x_i + x_j + x_k + y = \vec{0}$, so $\{x_i, x_j, x_k,y\}$ is a quad, contradicting the fact that $C$ is a cap. Hence $y$ must be the sum five or more elements of $C'$.
\end{proof}
\end{lemma}

Now we show that affine independence is a sufficient condition to be a cap. 

\begin{proposition} \label{thm:ind-implies-cap}
Let $S$ be an affinely independent subset of $\mathbb{Z}_2^n$. Then $S$ is a cap, and every element of $\exc(S)$ is a $1$-point.

\begin{proof}
Suppose that $S$ is affinely independent but not a cap. Then it contains a quad, so there are distinct elements $a,b,c,d \in S$ such that $a+b+c+d=\vec{0}$. Then $d = a+b+c$ is an affine combination of elements of $S$, contradicting the assumption that $S$ is affinely independent. Therefore $S$ must be a cap.

For the rest, let $p \in \exc(S)$ be an $m$-point and suppose that $m>1$. Then there are at least two distinct triples $\{x_1,y_1,z_1\}$ and $\{x_2,y_2,z_2\}$ in $S$ such that $x_1 + y_1 + z_1 = p$ and $x_2 + y_2 + z_2 = p$. Then $x_1 + y_1 + z_1 = x_2 + y_2 + z_2$, and adding $x_2 + y_2$ to both sides yields
\[
    x_1 + y_1 + z_1 + x_2 + y_2 = z_2.
\]
By Corollary~\ref{disjoint-triples}, the two sets of triples are disjoint, so we have five distinct elements of $S$ whose sum is an element of $S$. This contradicts the assumption that $S$ is affinely independent, so it must be that $m = 1$.
\end{proof}
\end{proposition}

To see that affine independence is not a necessary condition to be a cap, we have the following example.

\begin{example}
Consider the $6$-cap $C \subseteq \mathbb{Z}_2^5$ in Figure~\ref{fig:dep-cap}. Note that the exclude set contains $2$-points, so by Proposition~\ref{thm:ind-implies-cap}  $C$ is affinely dependent.
\end{example}

Since every quad completion of an affinely independent cap is a $1$-point, there is a unique triple in the cap that has the point as its sum. Hence, we have the following corollary to Proposition~\ref{thm:ind-implies-cap}.

\begin{corollary} \label{cor:ind-cap-exclude-size}
Let $C$ be an affinely independent $k$-cap. Then $\lvert \exc(C) \rvert = \binom{k}{3}$.
\end{corollary}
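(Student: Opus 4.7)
The plan is to combine Theorem~\ref{thm:ind-implies-cap} with part~(\ref{sum-of-types}) of Proposition~\ref{basic-props}, both of which have already been established. The key observation is that multiplicity-counting collapses to cardinality-counting exactly when every exclude point is a $1$-point, and Theorem~\ref{thm:ind-implies-cap} guarantees precisely this for affinely independent caps.

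Concretely, I would start by invoking Theorem~\ref{thm:ind-implies-cap} to conclude that $C$ is a cap (so the exclude set and its multiplicities are defined) and that $\mult(p) = 1$ for every $p \in \exc(C)$. Then I would apply Proposition~\ref{basic-props}(\ref{sum-of-types}) to the $k$-cap $C$ to obtain
\[
    \sum_{p \in \exc(C)} \mult(p) = \binom{k}{3}.
\]
Since each summand on the left equals $1$, the sum is just $\lvert \exc(C) \rvert$, yielding $\lvert \exc(C) \rvert = \binom{k}{3}$.

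There is essentially no obstacle here: the content has already been done in Theorem~\ref{thm:ind-implies-cap}, where affine independence was used (via Lemma~\ref{lem:5-or-more}-style reasoning and Corollary~\ref{disjoint-triples}) to rule out two distinct triples summing to the same element. Once that is in hand, the corollary is a one-line consequence of the counting identity in Proposition~\ref{basic-props}. The only thing to be careful about is to explicitly cite that every exclude point being a $1$-point is what makes $\sum_p \mult(p)$ collapse to $\lvert \exc(C) \rvert$, so that the reader sees the chain \emph{affine independence} $\Rightarrow$ \emph{all $1$-points} $\Rightarrow$ \emph{$\lvert \exc(C) \rvert = \binom{k}{3}$}.
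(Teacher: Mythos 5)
Your proposal is correct and matches the paper's reasoning: the paper derives the corollary from the remark that, by Theorem~\ref{thm:ind-implies-cap}, every exclude point of an affinely independent cap is a $1$-point, hence corresponds to a unique triple, giving $\binom{k}{3}$ exclude points. Your use of Proposition~\ref{basic-props}(\ref{sum-of-types}) to make the triple-counting explicit is just a slightly more formal packaging of the same argument.
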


\begin{figure}[htbp]
    \centering
    \includegraphics[width=6cm]{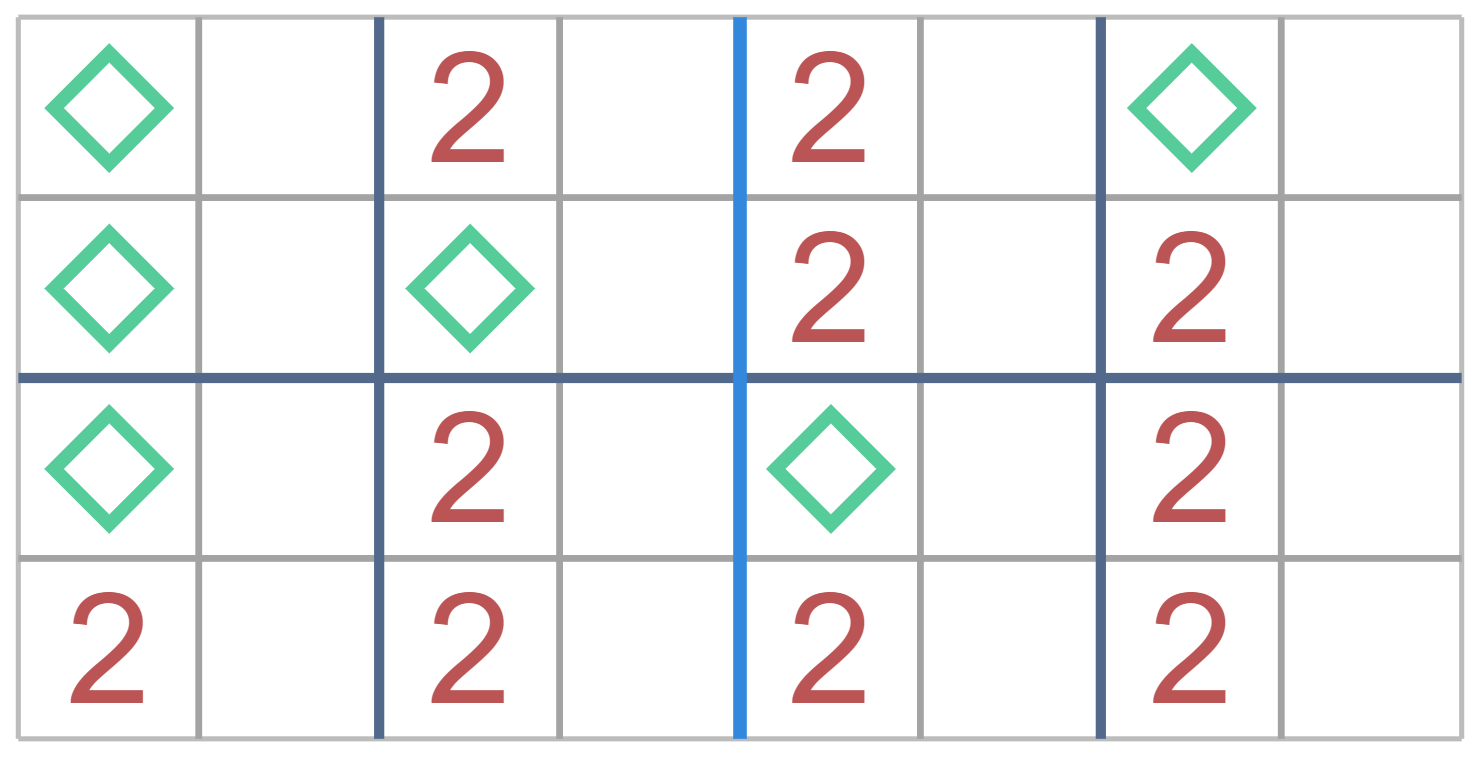}
    \caption{An affinely dependent $6$-cap in $\mathbb{Z}_2^5$.}
    \label{fig:dep-cap}
\end{figure}

Our next result shows that quad closure of a set $S$ (not necessarily a cap) is contained in the affine span of $S$, and by repeatedly applying the quad closure we will eventually get all of $\aff(S)$.

\begin{proposition} \label{prop:repeated}
Let $S \subseteq \mathbb{Z}_2^n$ and let $\qc^i(S)$ denote the result of taking the quad closure of $i$ times. Then: 
\begin{enumerate}
    \item $\qc(S) \subseteq \aff(S)$.
    \item For some $i$, $\qc^i(S) = \aff(S)$.
    \item $S = \qc(S)$ if and only if $S = \aff(S)$.
\end{enumerate}

\begin{proof} \leavevmode
\begin{enumerate}
    \item Recall that the exclude set $\exc(S)$ consists of all sums of $3$ distinct elements of $S$. Since the affine span $\aff(S)$ contains of all sums of an odd number of elements of $S$, it certainly contains sums of $1$ and $3$ elements. Thus,
    \[
        \qc(S) = S \cup \exc(S) \subseteq \aff(S),
    \]
    
   \item If $x \in S$, we will say $x$ is in the $0$th quad closure of $S$, and we know from above that $\qc^1(S)$ consists of all sums of $1$ and $3$ points in $S$. Suppose that all sums of $2m + 1$ points in $S$ lie in $\qc^i(S)$, for some fixed $i$, and let $x = s_1+\ldots+s_k$ be the sum of $k = 2(m + 1) + 1 = 2m + 3$ points in $S$. Then $k-2 = 2m + 1$, so $r = s_1+\ldots+s_{k-2}$ is in $\qc^i(S)$. This means that $x = r + s_{k-1} + s_k$ must be in $\qc^{i+1}(S)$. Thus, all sums of $2(m + 1) + 1$ points in $S$ lie in $\qc^i(S)$. By induction, each point in the affine span of $S$ is contained in $\qc^i(S)$ for some $i$. 

   \item Since $S \subseteq \qc(S) \subseteq$ aff(S), then $S = \aff(S)$ implies $S=\qc(S)$.  

   Conversely, if $S = \qc(S)$ then applying the quad closure again we obtain  $\qc(S) = \qc^2(S)$, so  $S = \qc^2(S)$. Continuing, we see that for some $i$ we have $S = \qc^i(S) = \aff(S)$.   

\end{enumerate}
\end{proof}
\end{proposition}

We now define the dimension of a cap, and introduce some terminology for maximal caps and the minimal dimension of a $k$-cap. (We introduced the definitions of $M(r)$ and $r_k$ at the beginning of this chapter, but recall them here for completion.)

\begin{definition}
Let $C$ be a cap in $\mathbb{Z}_2^n$ be a cap. 
\begin{enumerate}
    \item The \deff{dimension} of $C$, denoted $\dim(C)$, is the dimension of $\aff(C)$, the smallest flat containing $C$.
    \item We denote by $r_k$ the \textbf{smallest possible dimension} of a $k$-cap. 
    \item We denote by $M(r)$ the \textbf{maximal cap size} in an $r$-flat in $\mathbb{Z}_2^n$.
    \item If $C$ is $r$-dimensional and complete, we say that $C$ is a \deff{complete cap in dimension $r$}, and that $C$ \deff{completes} the $r$-flat $\aff(C)$.
\end{enumerate}
\end{definition}

\begin{proposition} \label{prop:complete-maximal-cap-size} \leavevmode
\begin{enumerate}
    \item All $k$-caps of dimension $k-1$ are affinely independent.
    \item For any $k$-cap $C$, we have $r_k \leq dim(C) \leq k-1$.
    \item The numbers $r_k$ are non-decreasing.
    \item If every $k$-cap of dimension $r$ is complete, then $M(r) = k$.
\end{enumerate}

\begin{proof} \leavevmode
\begin{enumerate}
    \item A $k$-cap of dimension $k-1$ spans a $(k-1)$-flat. The only way that the affine span of $k$ elements can be $k-1$ dimensional is if they are affinely independent. 
    \item By definition, $r_k$ is the minimum dimension of a flat that can contain a $k$-cap, hence $r_k \leq dim(C)$. Since the affine span of any $k$ points in $\mathbb{Z}_2^n$ has dimension at most $k-1$, it follows that the dimension of a $k$-cap is at most $k-1$.
    \item Since a subset of a cap is also a cap, it follows that the $r_k$ are non-decreasing.
    \item This is a restatement of Proposition~ \ref{basic-props}, which says that if all $k$-caps are complete, then they are maximal.
\end{enumerate}
\end{proof}
\end{proposition}

\subsection{Caps of sizes 1--5}

When $k \leq 5$, $k$-caps are easy to characterize, as they are all affinely independent. 

\begin{theorem} \label{thm:affine-ind}
Let $C$ be a $k$-cap. Then
\begin{enumerate}
    \item When $k \leq 4$, $C$ is a maximal cap in dimension $k-1$.
    \item When $k \leq 5$, $C$ is affinely independent.
    \item When $k \leq 5$, $r_k = k-1$.
    \item When $0 \leq r \leq 3$, $M(r) = r+1$.
\end{enumerate}

\begin{proof} \leavevmode

\begin{enumerate}

    \item Suppose $k \leq 4$. By (2), which we prove below, $C$ is affinely independent and the span of $C$ consists of all sums of one or three elements (since $k < 5$), so $\aff(C) = \qc(C)$, hence $C$ is a complete cap. Since all caps of size $k$ are complete in dimension $k-1$, by Proposition~\ref{prop:complete-maximal-cap-size} $C$ must be maximal and hence $M(k-1) = k$ .

    \item Suppose $k \leq 5$. The only affine combinations of elements of $C$ are sums of 1, 3, or 5 elements of $C$. Since there are at most five elements in $C$, the only way an element of $x \in C$ is a (non-trivial) affine combination of the other elements is if $x$ is the sum of 3 of them, which means that $x$ is a quad completion, contradicting the fact that $C$ is a cap. Therefore $C$ is affinely independent.

    \item All caps of size $k \leq 5$ are affinely independent, and hence have dimension $k-1$. Thus they cannot exist in any dimension smaller than $k-1$, so $r_k = k-1$ for $k \leq 5$.

    \item In the proof of (1), we showed that $M(k-1) = k$ for $1 \leq k \leq 4$. Setting $r = k-1$, we have $M(r) = r+1$ for $0 \leq r \leq 3$.

\end{enumerate}
\end{proof}
\end{theorem}

We can say a bit more about $5$-caps, namely that the quad closure is a $4$-flat minus a single point $y$. The results below provide a seamless transition to the study of $6$-caps.

\begin{theorem}[Properties of 5-caps] \label{thm:5-caps}
Let $C = \{x_1, x_2, x_3, x_4, x_5\}$ be a $5$-cap and $y =  x_1 + x_2 +x_3 + x_4 + x_5$ be the sum of the elements of $C$.
\begin{enumerate}
    \item The set $\qc(C) \cup \{y\}$ is a $4$-flat, and the union is disjoint.
    \item $C$ is not complete, but $C \cup \{y\}$ is a complete $6$-cap in dimension $4$.
    \item Every point in $\exc(C \cup \{y\})$ has multiplicity $2$.
\end{enumerate}

\begin{proof} \leavevmode
\begin{enumerate}
    \item By Remark~\ref{rem:odd-sums}, every element of $\aff(C)$ is the sum of an odd number of elements of $C$. Since $\lvert C \rvert = 5$, this means $\aff(C)$ consists of sums of one, three, and five distinct elements of $C$. The sums of one or three elements together with $C$ form the quad closure $\qc(C)$, and $y$ is the unique sum of five elements of $C$, so 
    \[
        \aff(C) = \qc(C) \cup \{y\}.
    \]

    We now show that $y \notin \qc(C)$, so this union is disjoint. Since $C$ is affinely independent, all sums of odd numbers of elements are distinct, so $y$ cannot be a sum of one or three elements of $C$. Hence, $y \notin \qc(C)$.

    \item Since there are $\binom{5}{1} + \binom{5}{3} + \binom{5}{5} = 16$ distinct sums of odd numbers of elements of $C$, $\aff(C)$ must be a $4$-flat. Since $\qc(C)$ contains all elements except $y$, the unique sum of five elements, this means that $C$ is not a complete cap in dimension $4$. But when we include $y$ we have $\qc(C \cup \{y\}) = \aff(C)$, so $C \cup \{y\}$ is a complete $6$-cap in dimension $4$.

    \item Now let $C' = C \cup \{y\}$, and note that since $y = x_1 + x_2 +x_3 + x_4 + x_5$, the sum of all six elements of $C'$ is $\vec{0}$. Rename the elements of $C'$ as $\{a,b,c,d,e,f\}$. Let $p \in \exc(C')$, and suppose without loss of generality that $p = a+b+C$. Since $a+b+c+d+e+f = \vec{0}$, it follows that $a+b+c=d+e+f$. Therefore $p$ has multiplicity $2$ or higher.
    By Corollary~\ref{cor:s-geq-3k}, a $6$-cap is too small to have exclude points of multiplicity $3$ or higher, so each $p$ must be a $2$-point. 

\end{enumerate}
\end{proof}
\end{theorem}

\begin{example}
In Figure~\ref{fig:5-cap} we give an example of a $5$-cap in a $4$-flat. Note that quad closure of the cap contains all but one point.

\begin{figure}[htbp]
    \centering
    \includegraphics[width=0.3\textwidth]{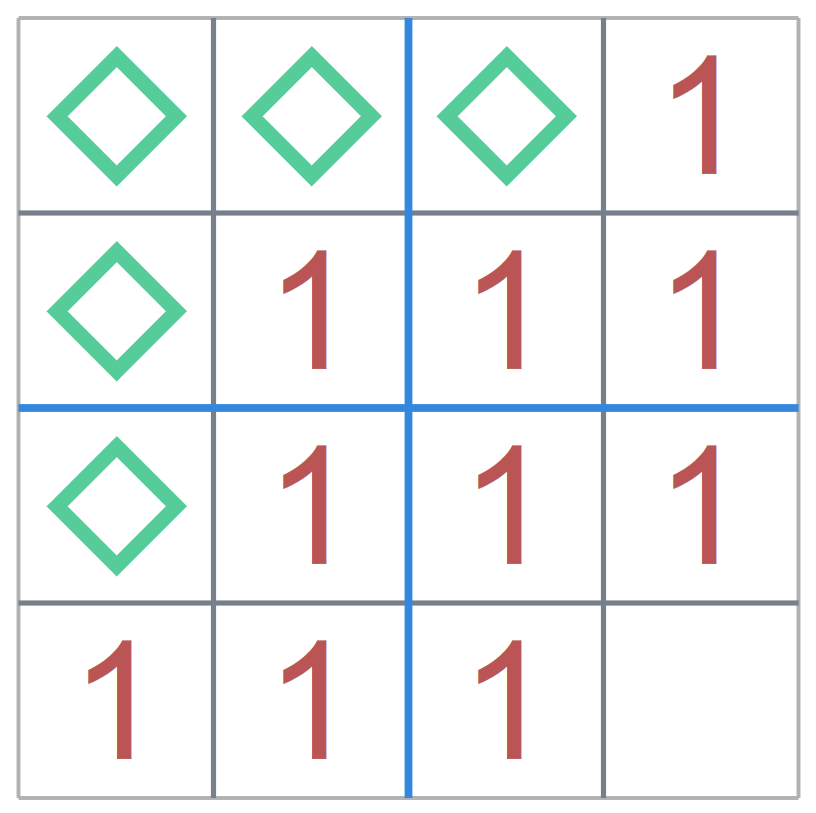}
    \caption{A $5$-cap in $\mathbb{Z}_2^4$.}
    \label{fig:5-cap}
\end{figure}
\end{example}

\subsection{6-caps}

When $k \geq 6$, $k$-caps can be affinely dependent, so we will analyze them by their cap dimension. It turns out that understanding $6$-caps is key to understanding caps in general.

There are two types of $6$-caps, one affinely independent and the other with a single dependence relation involving all 6 cap points. Moreover, caps of the latter type are maximal. 

\begin{theorem}[Characterization of 6-caps] 
\label{thm:6-cap-in-4-flat}
Let $C$ be a $6$-cap. Then
\begin{enumerate}
    \item The dimension of $C$ is either $4$ or $5$.
    \item If $\dim(C) = 5$, then $C$ is affinely independent.
    \item If $\dim(C) = 4$, then $C$ is a maximal cap in dimension $4$, and every $p \in \exc(C)$ is a $2$-point.
\end{enumerate}

\begin{proof} \leavevmode
\begin{enumerate}

    \item By Proposition~\ref{prop:complete-maximal-cap-size}, $r_6 \leq \dim(C) \leq 5$. Since $r_5 = 4$, it follows that $r_6 \geq 4$. By Theorem~\ref{thm:5-caps}, we know there are $6$-caps in dimension $4$, so $r_6 = 4$. Hence $\dim(C)$ equals $4$ or $5$.

    \item Suppose $\dim(C) = 5$. By statement 1 of Proposition~\ref{prop:complete-maximal-cap-size}, we know that $C$ is affinely independent.

    \item Suppose $\dim(C) = 4$. Let $F$ be the $4$-flat spanned by $C$, let $C'$ be a subset of $C$ with five elements, and let $y \in C - C'$ be the unique remaining element. Observe that $C'$ is a $5$-cap contained in the $4$-flat $F$. By part 2 of Theorem~\ref{thm:5-caps}, we know that $F$ is the union of $\qc(C')$ and one additional point, which must be $y$, and that $C = C' \cup \{y\}$ completes $F$. By part 3 of the theorem, every point in $\exc(C) = \exc(C' \cup \{y\})$ is a $2$-point.
    
    Since the argument above proves that every $6$-cap in dimension $4$ is complete, it follows from Proposition~\ref{prop:complete-maximal-cap-size} that all such caps are maximal.
    
\end{enumerate}
\end{proof}
\end{theorem}

\begin{example}
In Figure~\ref{fig:6-cap} we give an example of a $6$-cap in a $4$-flat. Note that this is the $5$-cap from Figure~\ref{fig:5-cap} with its one empty spot filled, which turns the $1$-points into $2$-points.
\end{example}

We can now compute $M(4)$, the maximal cap size in dimension $4$, as well as $r_6$ and $r_7$, the minimal dimensions of $6$-caps and $7$-caps. 

\begin{corollary} \label{cor:6-7-cap-dim}
$M(4) = 6$, $r_6 = 4$, and $r_7 = 5$.

\begin{proof}
By statement 3 of Theorem~\ref{thm:6-cap-in-4-flat}, caps of size $6$ are maximal in dimension $4$, so $M(4) = 6$. The value $r_6 = 4$ is determined in the proof of statement 1. Since $M(4) = 6$, a $7$-cap cannot exist in dimension $4$, so $r_7$ must be at least $5$. Indeed, a $7$-cap in dimension $5$ can be constructed from any $6$-cap $C$ in dimension $4$ by adding any point outside of the $4$-flat containing $C$, since $\qc(C)$ is contained in this $4$-flat.
\end{proof}
\end{corollary}

In Figures~\ref{fig:6-cap} and~\ref{fig:7-cap-dim-5} we give examples of a $6$-cap of dimension $4$ and a $7$-cap in dimension $5$.

\begin{figure}[htbp]
    \centering
    \includegraphics[width=6cm]{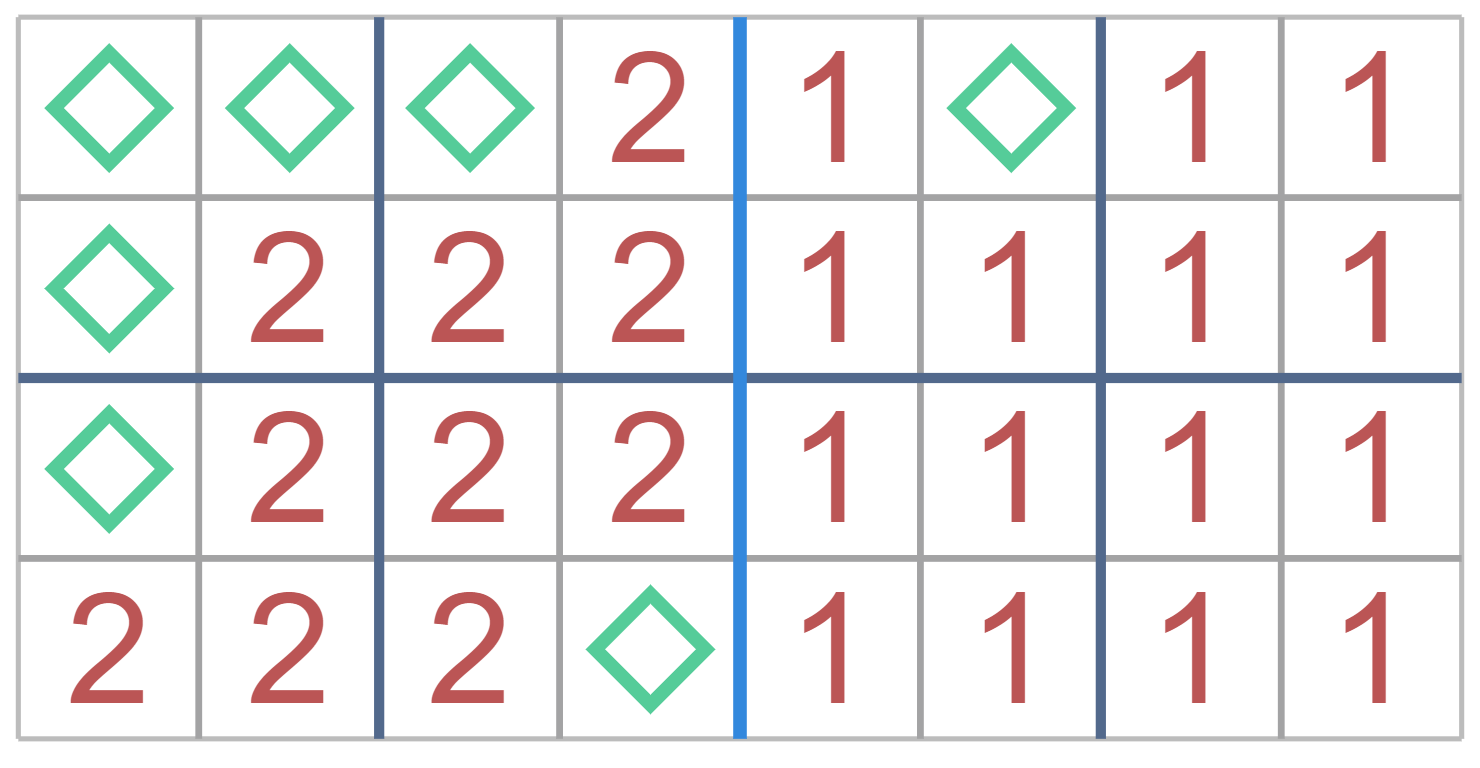}
    \caption{A $7$-cap in $\mathbb{Z}_2^5$.}
    \label{fig:7-cap-dim-5}
\end{figure}

%As a result of Theorem~\ref{thm:6-cap-in-4-flat}, we can divide caps of size $k > 6$ into %two types, depending on whether the cap contains a $6$-cap of dimension $4$ or not, which is %equivalent to whether or not there is a $2$-point in the exclude set of the $6$-cap. 

The remainder of the results in this subsection will be used to classify caps of sizes $7$, $8$ and $9$. 

The following corollary to Theorem~\ref{thm:6-cap-in-4-flat} shows that any cap with an exclude point of multiplicity $2$ or higher must contain a $6$-cap of dimension $4$ as a subset. 

\begin{corollary} \label{cor:mult-2-or-higher}
Let $C$ be a cap. Then $C$ has an exclude point of multiplicity $2$ or higher if and only if $C$ contains a $6$-cap of dimension $4$.

\begin{proof} \leavevmode
\begin{description}
    \item[$(\Rightarrow)$] Suppose $C$ contains a subset $C'$ of $6$ points that span in a $4$-flat. By Theorem~\ref{thm:6-cap-in-4-flat} every point of $\exc(C')$ has multiplicity at least $2$. Since $\exc(C') \subseteq \exc(C)$ and multiplicities can only increase in going from $C'$ to $C$, $\exc(C)$ must contain a point with multiplicity at least $2$.
    
    \item[$(\Leftarrow)$] Suppose $\exc(C)$ contains a point $p$ with multiplicity $2$ or higher. Then there are distinct points $a,b,c,d,e,f \in C$ such that $a+b+c = d+e+f = p$. Then adding $b+c$ to both triple sums yields
    \[
        a = b+c+d+e+f.
    \]
    Since this is a sum of $5$ elements, $a$ is an affine combination of these five elements. Therefore $\{a,b,c,d,e,f\}$ is an affinely dependent $6$-cap. By Theorem~\ref{thm:6-cap-in-4-flat} an affinely dependent $6$-cap must have dimension $4$.
\end{description}
\end{proof}
\end{corollary}

Let $C$ be a cap containing a $6$-cap that spans a $4$-flat $F$. Then any translation of $F$ contains at most one point from $C$. Before proving this, we give an example.
 
\begin{example}
In Figure~\ref{fig:4-flat-partition}, the $4$-flat in the upper-left corner contains 6 cap points, and we see that no more than one of the remaining cap points lies in any other $4$-flat.

\begin{figure}[htbp]
    \centering
    \includegraphics[width=6cm]{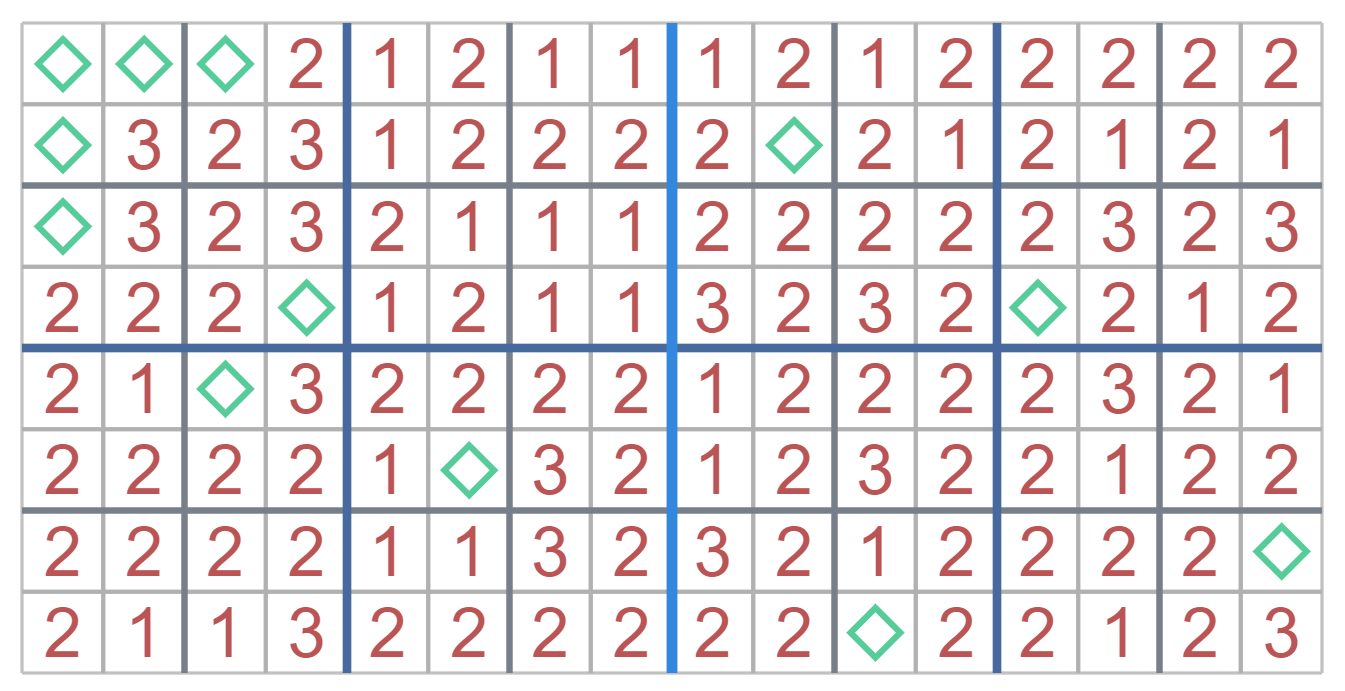}
    % I also have an example in dimension 6 and dimension 8. Just change the 7 in the image filename.
    \caption{A cap containing six points in a $4$-flat.}
    \label{fig:4-flat-partition}
\end{figure}
\end{example}

\begin{proposition} \label{prop:1-per-4-flat}
Let $C$ be a cap containing a $6$-cap that spans a $4$-flat $F$.
Let $F_1, F_2, \ldots, F_{2^{n-4}}$ be a partition of $\mathbb{Z}_2^n$ into $4$-flats such that $F_1 = F$. Then each $4$-flat $F_i$ with $i \geq 1$ contains at most one point in $C$, i.e.,
\[
  \lvert F_i \cap C \rvert \leq 1 \qquad \text{for $i = 2, \ldots, 2^{n-4}$}.
\]

\begin{proof}
Let $x_1, \ldots, x_6 \in C$ be points contained in $F_1$, and let $y \in C$ be another point. By Corollary~\ref{cor:6-7-cap-dim}, a $4$-flat cannot contain more than $6$ cap points, so it follows that $y \in F_k$ for some $k \geq 2$.
 
Let $L$ be the linear subspace whose cosets are the flats $F_1, \ldots, F_{2^{n-4}}$. There are $\binom{6}{2} = 15$ excludes of $C$ of the form $p_{ij} = x_i + x_j + y$ for $i \leq j$. Since $x_i, x_j \in F_1$, by Lemma~ \ref{rem:odd-sums} we know that $x_i + x_j \in L$. Therefore
\[
    p_{ij}
    \in (x_i + x_j + y) + L
    = [(x_i + x_j) + L)] + (y + L)
    = L + (y + L)
    = y + L
\]
so $p_{ij} \in y + L = F_k$. Also note that $p_{ij} \neq y$, or else $p_{ij} = x_i + x_j + y$ would imply that $x_i = x_j$, which contradicts the fact that these points are distinct.

Now by Corollary~\ref{disjoint-triples}, if $p_{ij}=p_{\ell m}$, then $\{x_i, x_j, y\}$ and $\{x_{\ell}, x_m, y\}$ must be disjoint, which is impossible since $y$ is in both sets. Since the $15$ elements $p_{ij}$ are distinct points of the $4$-flat $F_k$, and there are only $16$ points in $F_k$, we conclude that $y$ is the only possible point of $C$ in $F_k$.
\end{proof}
\end{proposition}

\subsection{7-caps}

We can now characterize $7$-caps. Since $r_7 = 5$, any $7$-cap must have dimension $5$ or $6$.

\begin{theorem}[Characterization of 7-caps] 
\label{thm:7-cap-in-5-flat}
Let $C$ be a $7$-cap. Then
\begin{enumerate}
    \item $\dim(C)$ is either $5$ or $6$.
    \item If $\dim(C) = 6$, then $C$ is affinely independent.
    \item If $\dim(C) = 5$, then $C$ is maximal in dimension $5$ and $\exc(C)$ contains a $2$-point.
\end{enumerate}

\begin{proof}
\begin{enumerate}

    \item By Proposition~\ref{prop:complete-maximal-cap-size}, $r_7 \leq \dim(C) \leq 6$. By Corollary~\ref{cor:6-7-cap-dim} we know that $r_7 = 5$, so $\dim(C)$ equals $5$ or $6$.

    \item Suppose $\dim(C) = 6$. By statement 1 of Proposition~\ref{prop:complete-maximal-cap-size}, we know that $C$ is affinely independent.

    \item Suppose $C$ has dimenson $5$. Then $C$ is not affinely independent, but it contains a $6$-cap $C'$ of dimension $5$ which is affinely independent. Write $C' = \{x_1, \ldots, x_6\}$ and $C = \{x_1, \ldots, x_7\}$. Then $x_7$ must be an affine combination of points of $C'$. It cannot be the sum of $3$ points because $C$ is a cap, and it cannot be the sum of $7$ or more points since $\abs{C'} = 6$. This means $x_7$ is the sum of $5$ affinely independent points in $C'$. Without loss of generality, assume $x_7 = x_1 + \cdots + x_5$. Then $C'' = \{x_1, \ldots, x_5, x_7\}$ is a $6$-cap that spans a $4$-flat.
    
    By statement 4 of Lemma~\ref{rem:odd-sums}, we can partition the $5$-flat $\aff(C)$ into two $4$-flats, $F_1$ and $F_2$, one of which, say $F_1$, is the span of $C''$. By Corollary \ref{prop:1-per-4-flat}, there can be at most one point of $C$ in $F_2$, namely $x_6$, which means that $C$ is complete in dimension $7$. 
    
    Since the argument above proves that every $7$-cap in dimension $5$ is complete, it follows from Proposition~\ref{prop:complete-maximal-cap-size} that all such caps are maximal.
    
    Finally, note that $x_7 = x_1 + \cdots + x_5$, and this is the only affine dependence of elements of $C$ involving $x_7$, because $C' = \{x_1, \ldots, x_6\}$ is affinely independent. Therefore the point $p = x_1 + x_2 + x_3 = x_4 + x_5 + x_7$ in $\exc(C)$ cannot be written in any other ways as the sum of three elements of $C$, so it is a $2$-point.

%    \item Suppose $C$ has dimenson $5$. This means $C = \{x_1, \ldots, x_7\}$ contains a subset $C'$ of 6 affinely independent points that span a 5-flat. Without loss of generality, assume  $C' = \{x_1, \ldots, x_6\}$. Then $x_7$ is dependent, so it is an affine combination of points of $C'$. It can't be the sum of 3 points because it is a cap point, and it can't be the sum of 7 or more points since $|C| = 6$. This means $x_7$ is the sum of the 5 cap points lying in, say,  $C" = \{x_1, \ldots, x_5\}$. Then $x_7$ lies in the affine span of these points, which is a 4-flat, so we now have 6 cap points in a 4-flat. Since $\aff(C)$ is a 5-flat, we can partition it into two 4-flats, $F_1$ and $F_2$, one of which, say $F_1$, is the span of $C''$. By Corollary \ref{prop:1-per-4-flat}, there can be at most 1 cap point in $F_2$, namely $x_6$, which means that $C$ is complete in dimension 7. Since all 7 caps in dimension 5 can be constructed in the same way, they will all be complete, so by \ref{prop:complete-maximal-cap-size} we conclude that $C$ is maximal.

\end{enumerate}
\end{proof}
\end{theorem}

We can now determine the maximal cap size in dimension $5$, as well as the smallest dimension of a flat that can contain an $8$-cap or a $9$-cap.

\begin{corollary} \label{cor:max-cap-dim-5}
$M(5) = 7$, $r_8 = 6$, and $r_9 = 6$.

\begin{proof}
By statement 3 of Theorem~\ref{thm:7-cap-in-5-flat}, caps of size $7$ are maximal in dimension $5$, so $M(5) = 7$, and so $r_8$ must be at least $8$. Indeed, an $8$-cap in dimension $6$ can be constructed from any $7$-cap $C$ in dimension $5$ by adding any point outside of the $5$-flat containing $C$, since $\qc(C)$ is contained in this $5$-flat. Since $r_8 = 6$, we know $r_9$ must be at least $6$. In the next section we will show that $8$-caps need not be maximal, which proves that 9-caps can exist in dimension $6$, so $r_9 = 6$. Examples are given in Figures~\ref{fig:7-cap-dim-5} and~\ref{fig:89-caps-dim-6} of a $7$-cap in dimension $5$, an $8$-cap in dimension $6$, and a $9$-cap in dimension $6$, respectively.
\end{proof}
\end{corollary}

\begin{figure}[htbp]
    \centering
    \includegraphics[width=5cm]{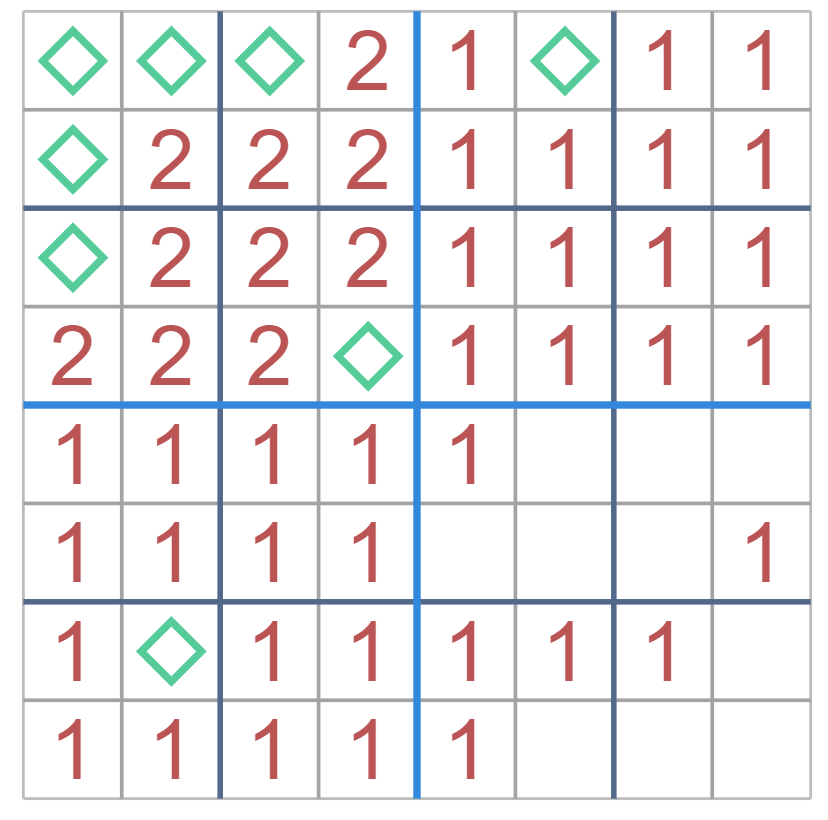}
    \qquad
    \includegraphics[width=5cm]{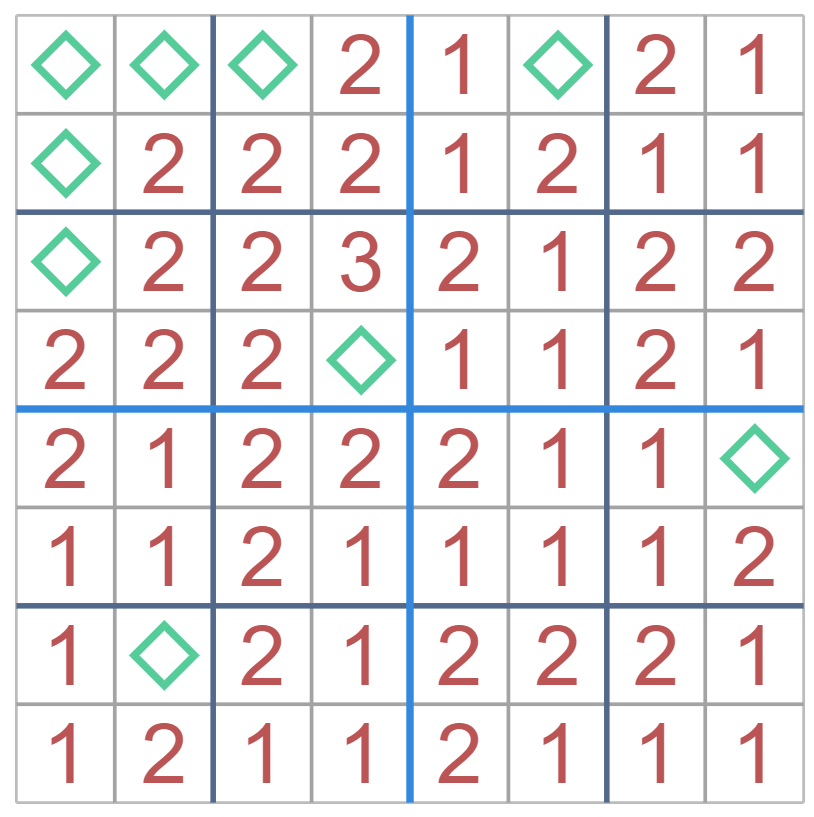}
    \caption{An $8$-cap (left) and a $9$-cap (right) in $\mathbb{Z}_2^6$.}
    \label{fig:89-caps-dim-6}
\end{figure}

\subsection{Maximal cap sizes and minimal cap dimensions}

We conclude this section with a summary of our results so far, in addition to the values of $r_{10}$ and $M(6)$, which will be derived in the next section. These are given in Tables~\ref{tab:k-cap-min-dims} and~\ref{tab:max-cap-sizes}.

\begin{table}[htbp]
    \centering
    \caption{The smallest dimension $r_k$ of a flat that admits a $k$-cap.}
    \label{tab:k-cap-min-dims}
    \begin{tabular}{|c||*{10}{c|}}
    \hline
    $k$ & 1 & 2 & 3 & 4 & 5 & 6 & 7 & 8 & 9 & 10 \\
    \hline
    $r_k$ & 0 & 1 & 2 & 3 & 4 & 4 & 5 & 6 & 6 & 7 \\
    \hline
    \end{tabular}
\end{table}

\begin{table}[htbp]
    \centering
    \caption{The maximal size $M(r)$ of a cap in dimension $r$.}
    \label{tab:max-cap-sizes}
    \begin{tabular}{|c||*{6}{c|}}
    \hline
    $r$ & 1 & 2 & 3 & 4 & 5 & 6 \\
    \hline
    $M(r)$ & 2 & 3 & 4 & 6 & 7 & 9 \\
    \hline
    \end{tabular}
\end{table}

\section{Equivalence Classes of Caps} \label{sec:equivalenceclasses}

In this section we complete our characterization of caps of sizes 1--9, and use this to partition $k$-caps into affine equivalence classes. 

We say that two caps in $\mathbb{Z}_2^n$ are \deff{equivalent} if they are affinely equivalent as subsets of $\mathbb{Z}_2^n$. Note that equivalent caps have the same cardinality, the same multiplicities of exclude points, and the same dimension. 

We begin by proving general results about equivalence classes for $k$-caps of dimensions $k-1$ and $k-2$. Recall that the dimension of a $k$-cap is at most $k-1$. 
We follow this with a characterization of equivalence classes of $k$-caps for all $k \leq 9$. Once $k \geq 10$, the analysis becomes much more complicated, and at this point we only have partial results. Fortunately, all $k$-caps in $AG(6,2)$ are of size $k \leq 9$, so these results are sufficient to completely characterize all caps in a Quad-64 deck.
 
In Table~\ref{tab:equiv-classes}, we give the number of equivalence classes of $k$-caps in $AG(n,2)$ along with their dimensions for $k \leq 9$ and $n \geq r_k$. Proofs of the results in the table are given in the remainder of this section.

\begin{table}[htbp]
    \centering
    \caption{Dimensions of all equivalence classes of $k$-caps for $1 \leq k \leq 9$.}
    \label{tab:equiv-classes}
    \begin{tabular}{|c|c|}
    \hline
    k & \text{Dimensions of Equivalence Classes of $k$-Caps} \\
    \hline \hline
    1 & \text{0} \\ \hline
    2 & \text{1} \\ \hline
    3 & \text{2} \\ \hline
    4 & \text{3} \\ \hline
    5 & \text{4} \\ \hline
    6 & \text{4, 5} \\ \hline
    7 & \text{5, 6} \\ \hline
    8 & \text{6 (two classes), 7} \\ \hline
    9 & \text{6, 7 (two classes), 8} \\
    \hline
    \end{tabular}
\end{table}

\subsection{Caps with at most one dependence relation}

We start by classifying equivalence classes of $k$-caps of dimension $k-1$, which are affinely independent, and those of dimension $k-2$, which have exactly one dependence relation. In the first case, there is only one equivalence class. In the second case, there is an equivalence class for each odd number between $5$ and $k-1$ --- note that this can only occur when $k \geq 6$.

Recall that in $AG(n,2)$, if $n < r_k$ then there cannot be a $k$-cap, so in what follows we assume that $n \geq r_k$.

%\label{thm:aff-ind-equiv-class} is now wrong.
%JUST REFER TO Proposition~\ref{prop:complete-maximal-cap-size}.

\begin{theorem} \label{thm:k-1-equiv-classes} \leavevmode
\begin{enumerate}
    \item For any $k$, there is exactly one equivalence class of $k$-caps of dimension $k-1$.
    \item For $k \geq 6$, the $k$-caps of dimension $k-2$ can be partitioned into $\floor*{\frac{k-2}{2}} - 1$ non-empty equivalence classes, one for each odd integer $2m+1$ such $5 \leq 2m+1 \leq k-1$. Caps in the equivalence class corresponding to $2m+1$ contain a dependence relation of $2m+2$ points.
\end{enumerate}

\begin{proof}
By Proposition~\ref{prop:complete-maximal-cap-size}, every $k$-cap of dimension $k-1$ consists of $k$ affinely independent points, and therefore any two are equivalent. Hence for each $k$ there is only one equivalence class of $k$-caps of dimension $k-1$.

Let $C$ and $D$ be $k$-caps of dimension $k-2$ for some $k \geq 6$. Then there exist affinely independent caps $C' \subseteq C$ and $D' \subseteq D$ of size $k-1$, and $C - C'$ and $D - D'$ contain just one point each, say $x \in C - C'$ and $y \in D - D'$. By Lemma~\ref{lem:5-or-more}, $x$ is the sum of an odd number $2 \ell + 1$ of elements of $C'$ and $y$ is the sum of an odd number $2m+1$ elements of $D'$, where $2 \ell + 1$ and $2m + 1$ are between $5$ and $k-1$.
We will show that $C \cong D$ if and only if $\ell = m$. 

\begin{description}
\item[$(\Rightarrow)$] Suppose $C \cong D$ via an affine isomorphism $A : \mathbb{Z}_2^n \to \mathbb{Z}_2^n$ with $A(C) = D$. Since $A$ preserves affine independence and affine combinations, we know $A(C')$ is an affinely independent subset of $D$ of size $k-1$, and $A(x)$ is the sum of $2\ell + 1$ elements of $A(C')$. 

Write $x = x_1 + \cdots + x_{2\ell+1}$ for distinct $x_1, \ldots, x_{2\ell+1} \in C'$, and let $x_{2\ell+2}, \ldots, x_{k-1}$ be the remaining elements of $C'$. Then $A(x) = A(x_1) + \cdots + A(x_{2\ell+1})$, so $A(x) + A(x_1) + \cdots + A(x_{2\ell+1}) = \vec{0}$, which means for $1 \leq i \leq 2\ell+1$ any $A(x_i)$ is the sum of the other $2\ell+1$ elements. Then $y$ must be an element of the dependent set $A(C') = \{A(x), A(x_1), \ldots, A(x_{2\ell+1}), \ldots, A(x_{k-1}) \}$, for otherwise the affinely independent set $D' = D - \{y\}$ will contain the \emph{dependent} set $\{ A(x), A(x_1), \ldots, A(x_{2\ell+1}) \}$. This is a contradiction. Therefore $y = A(x)$ or $y = A(x_i)$ for some $1 \leq i \leq 2\ell+1$, so $y$ is the sum $2\ell+1$ elements of $D = A(C)$, as noted above. Note that if $\ell \neq m$, then $y$ is also the sum of $2m+1$ elements of $D$, and  equating the two sums will give an affine dependency in $D'$, which is impossible. So it must be that $2 \ell + 1 = 2m + 1$ and hence $\ell = m$.

%Write $A(x) = x_1 + \cdots + x_{2\ell+1}$ for distinct $x_1, \ldots, x_{2\ell+1} \in A(C')$, and let $x_{2\ell+2}, \ldots, x_{k-1}$ be the remaining elements of $A(C')$.  Observe that $A(x) + x_1 + \cdots + x_{2\ell+1} = \vec{0}$, so for $1 \leq i \leq 2\ell+1$ any $x_i$ is the sum of the other $2\ell+1$ elements. Then $y$ must be an element of the dependent set $\{A(x), x_1, \ldots, x_{2\ell+1}, \ldots, x_{k-1} \}$, for otherwise the affinely independent set $D'$ will contain the affinely \emph{dependent} set $A(x), x_1, \ldots, x_{2\ell+1}$. This is a contradiction. Therefore $y = A(x)$ or $y = A(x_i)$ for some $1 \leq i \leq 2\ell+1$, so $y$ is the sum $2\ell+1$ elements of $D'$. Note that if $\ell \neq m$, then $y$ is also the sum of $2m+1$ elements of $D$, and  equating the two sums will also imply a dependency in $D'$, so it must be  that $2 \ell + 1 = 2m + 1$.

\item[$(\Leftarrow)$] Suppose $\ell = m$. Let $x_1, \ldots, x_{2m+1} \in C'$ and $y_1, \ldots, y_{2m+1} \in D'$ be elements such that $x = x_1 + \cdots + x_{2m+1}$ and $y = y_1 + \cdots + y_{2m+1}$. Since $C'$ and $D'$ are affinely independent, we know there is some affine isomorphism from $\mathbb{Z}_2^n$ to $\mathbb{Z}_2^n$ such that $x_i \mapsto y_i$ for $1 \leq i \leq 2m+1$. Since affine tranformations preserve affine combinations, it follows that the isomorphism also maps $x = x_1 + \cdots + x_{2m+1}$ to $y = y_1 + \cdots + y_{2m+1}$. Therefore $C \cong D$ under this isomorphism.

\end{description}

Now we will show that each equivalence class is nonempty. Let $2m+1$ be an odd integer with $6 \leq m \leq k-1$. Let $C'$ be an affinely independent set of $k-1$ elements, so $C'$ is a $(k-1)$-cap of dimension $k-2$, and let $F = \aff(C')$. Let $y$ be the sum of $2m+1$ elements of $C'$. Then $y \in F$ so $C = C' \cup \{y\} \subseteq F$. We will prove that $C$ is a $k$-cap of dimension $k-2$. It suffices to show that $y \notin \exc(C')$.

Suppose that $y \in \exc(C')$. Then $y$ is the sum of both three elements of $C'$ and $2m+1 > 3$ elements of $C'$. By the uniqueness of distinct affine combinations of affinely independent elements, this is a contradiction. It follows that $y \notin \exc(C')$, so $C = C' \cup \{y\}$ is a $k$-cap with dimension $k-2$ contained in the specified equivalence class.

Finally, note that if $y \in C$ is the sum of $2m+1$ other points in $C$, then these $2m+2$ points sum to $\vec{0}$ so form a dependence relation.
\end{proof}
\end{theorem}

The proof above contained the following result.

\begin{corollary} \label{coro:unique-odd}
Let $C$ be a $(k-2)$-dimensional $k$-cap, where $k \geq 6$. Then there is a unique integer odd integer $5 \leq 2m+1 \leq k-1$ such that for any affinely independent subset $C' \subseteq C$ of size $k-1$, the singleton in $C - C'$ is a sum of $2m+1$ elements of $C'$. Equivalently, the only dependence relation in $C$ contains $2m+2$ points.
\end{corollary}

The situation for $k$-caps of dimension greater than or equal to $k-3$ is more complicated in general. We will see that for $k \leq 8$, all $k$-caps have dimension $k-1$ or $k-2$. It turns out that if we only want to analyze caps in Quad-64, it is sufficient to understand $k$-caps for $1 \leq k \leq 9$. As $k=9$ is the only case where there can be a cap of dimension $k-3$, we will only develop the theory of $k-3$ dimensional caps in this context.

\subsection{Equivalence classes of caps of sizes 1--8}
%{Equivalence classes of $k$-caps for $k \leq 8$}

At this point, the hard work done in the previous sections will pay off. We have everything we need to classify caps of size up to $8$ up to affine equivalence. After this we will classify $9$-caps, the one remaining case that occurs in a Quad-64 deck.  

\begin{theorem}[Equivalence classes of $k$-caps for $k \leq 8$]
\label{thm:cap-classes}
\leavevmode
\begin{enumerate}
    \item For $1 \leq k \leq 5$, there is exactly one equivalence class of $k$-caps.
    \item There are two equivalence classes of $6$-caps, one for each cap dimension.
    \item There are two equivalence classes of $7$-caps, one for each cap dimension.
    \item There are three equivalence classes of $8$-caps; one for dimension $7$ and two for dimension $6$.
\end{enumerate}

\begin{proof}
By Theorem~ \ref{thm:k-1-equiv-classes}, any two $k$-caps of dimension $k-1$ are equivalent. Since the all caps of size  $k \leq 5$ are of dimension $k-1$ , there is only one equivalence class for these values of $k$. 

We've shown previously that $r_k = k-2$ for $6 \leq k \leq 8$, so caps of these sizes can occur in only two dimensions, $k-1$ and $k-2$. In the first case, $k-1$, we have one equivalence class for each $k$.

In the second case, according to \ref{thm:k-1-equiv-classes}, there is one non-empty equivalence class of $k$-caps of dimension $k-2$ for each odd integer $2m+1$ such that $5 \leq 2m+1 \leq k-1$. When $k=6$, we have $5 \leq 2m+1 \leq 6-1 = 5$, so there is only one equivalence class, and it corresponds to a dependence relation where one cap point is the sum of the remaining $5$.
When $k=7$, because $5 \leq 2m+1 \leq 7-1 = 6$, there is also only one equivalence class in dimension $5$, and it corresponds to a dependence relation where one cap point is the sum of $5$ other cap points.
When $k=8$, we have $5 \leq 2m+1 \leq 8-1 = 7$, so there are two equivalence classes in dimension $6$. In one case, there is a cap point that is the sum of $5$ others, and in the other there is a cap point that is the sum of the remaining $7$ points.
\end{proof}
\end{theorem}

In Figures~\ref{fig:caps-3-5}, \ref{fig:6-caps-types}, \ref{fig:7-caps-types},  and \ref{fig:8-caps-types}, we provide an example of a $k$-cap in each equivalence class, for $3 \leq k \leq 8$.

\begin{figure}[htbp]
    \centering
    \includegraphics[scale=0.14]{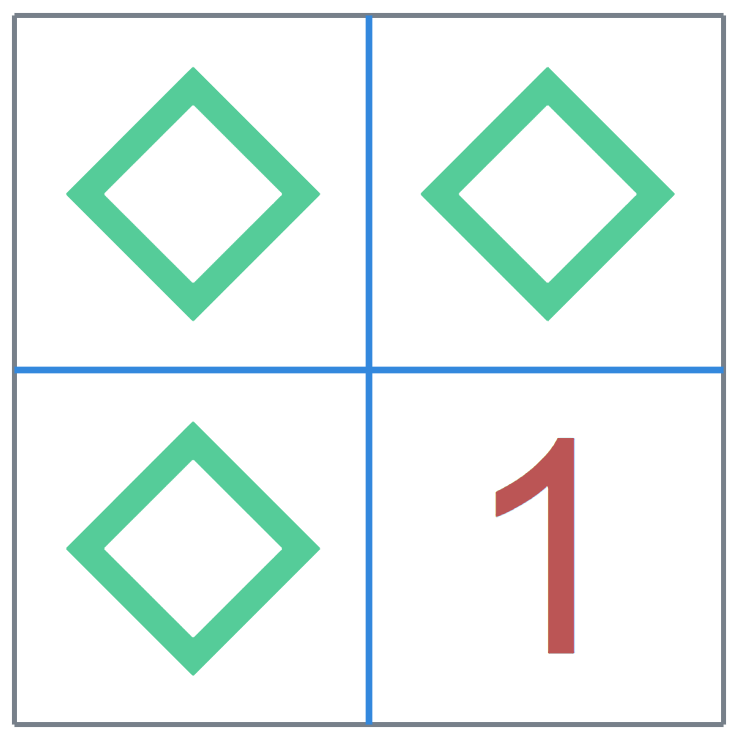}
    \qquad
    \includegraphics[scale=0.15]{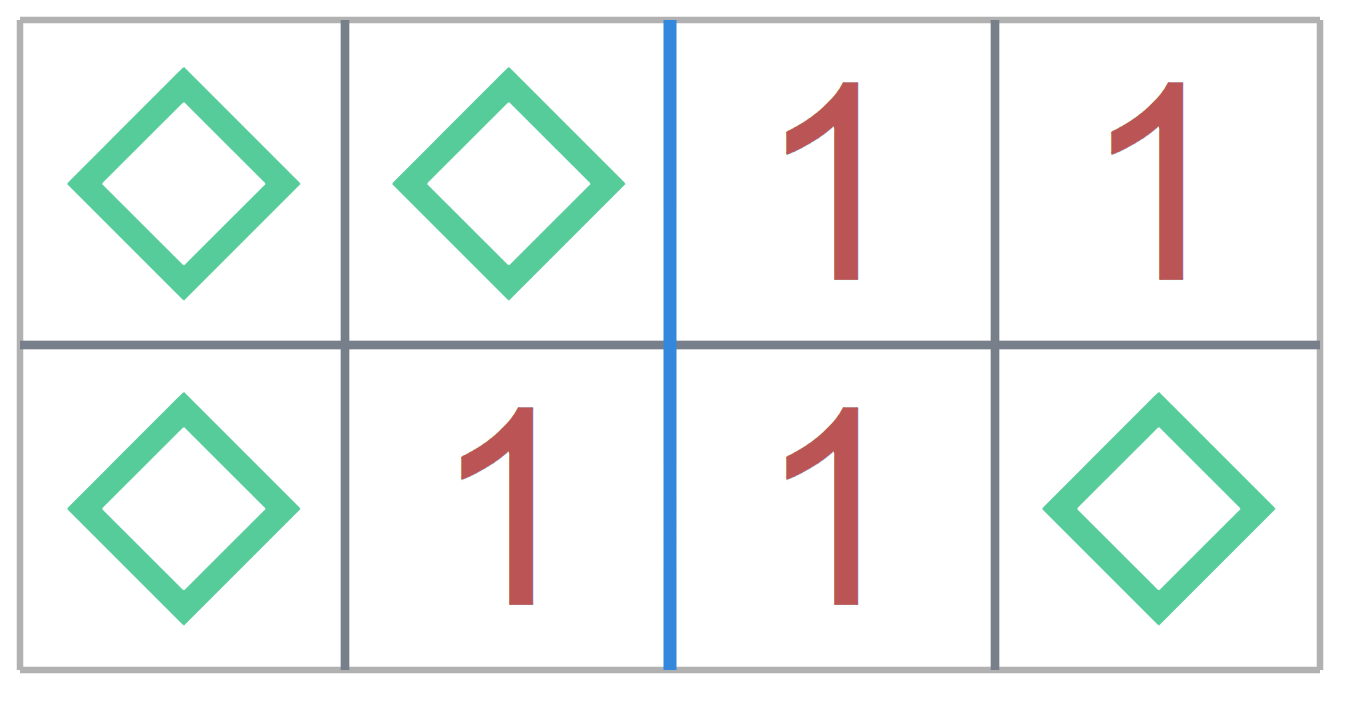}
    \qquad
    \includegraphics[scale=0.20]{images/5-cap-bottom-right.png}
    \caption{Caps of sizes 3, 4, and 5.}
    \label{fig:caps-3-5}
\end{figure}

\begin{figure}[htbp]
    \centering
    \includegraphics[scale=0.23]{images/6-cap.png}
    \qquad
    \includegraphics[scale=0.23]{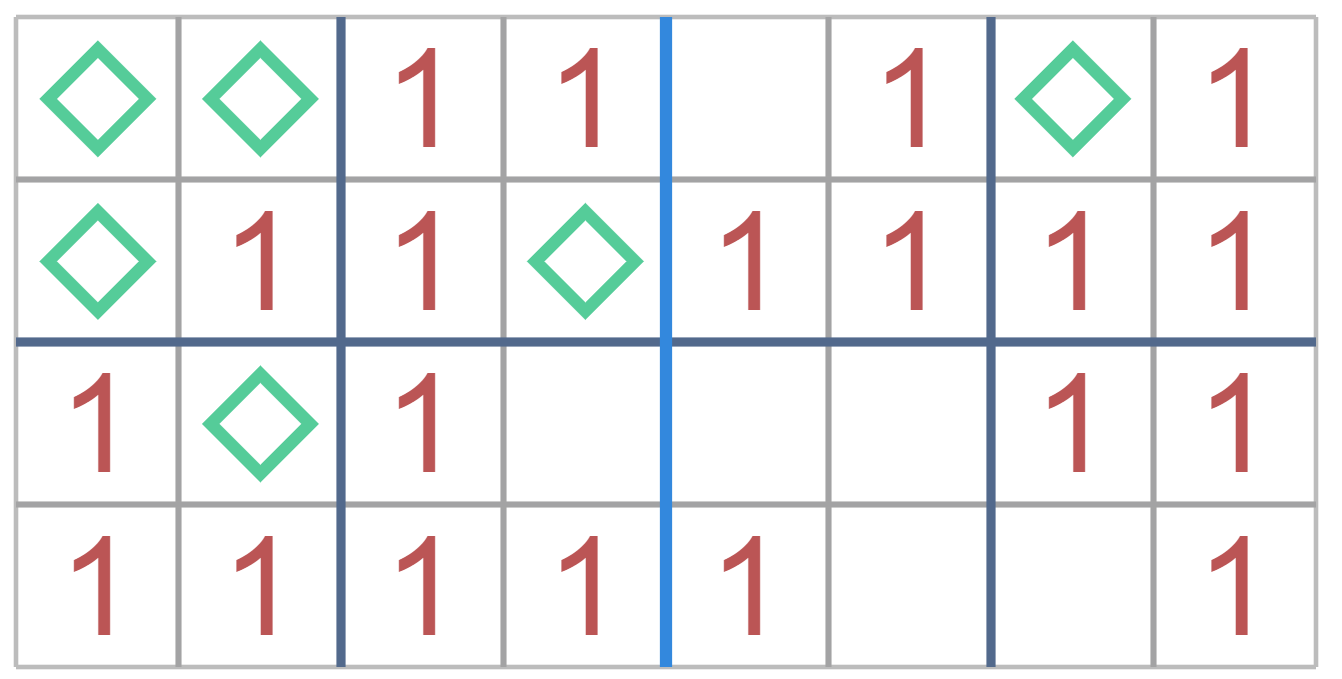}
    \caption{Cap in each of the two equivalence classes of $6$-caps.}
    \label{fig:6-caps-types}
\end{figure}

\begin{figure}[htbp]
    \centering
    \includegraphics[scale=0.17]{images/7-cap-dim-5.png}
    \qquad
    \includegraphics[scale=0.3]{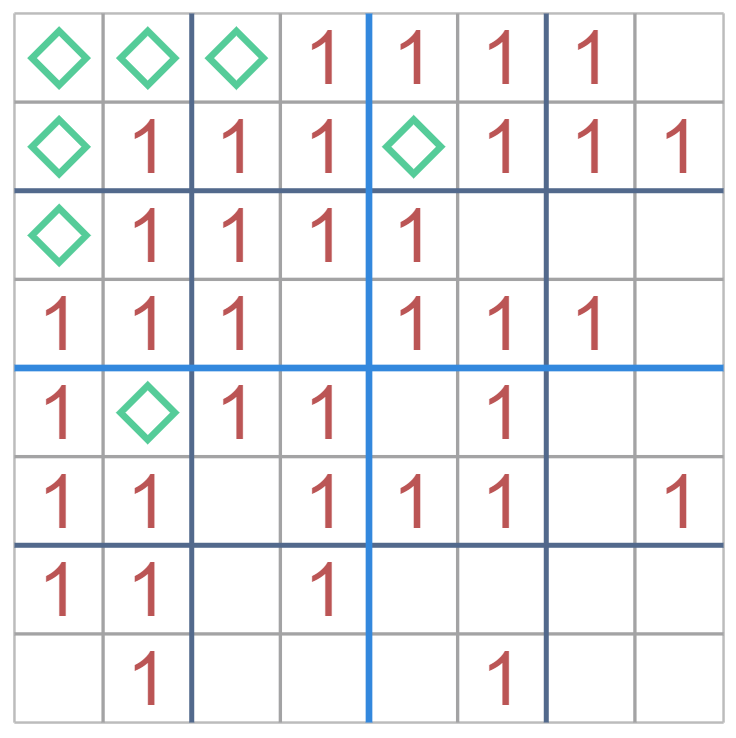}
    \caption{Caps in each of the two equivalence classes of $7$-caps.}
    \label{fig:7-caps-types}
\end{figure}

\begin{figure}[htbp]
    \centering
    \includegraphics[scale=0.34]{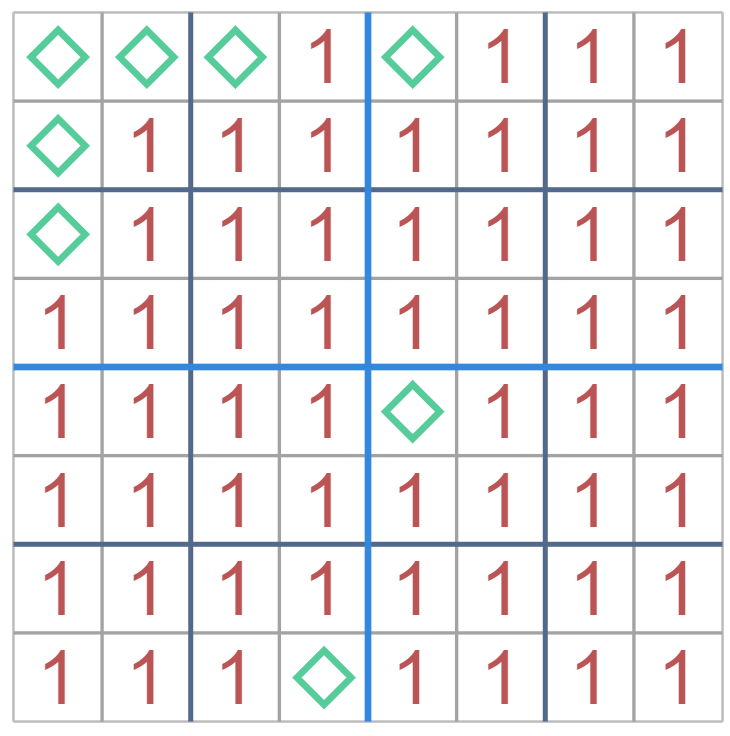}
    \qquad
    \includegraphics[scale=0.3]{images/8-cap-dim-6.png}
    
    \bigskip
    \includegraphics[scale=0.35]{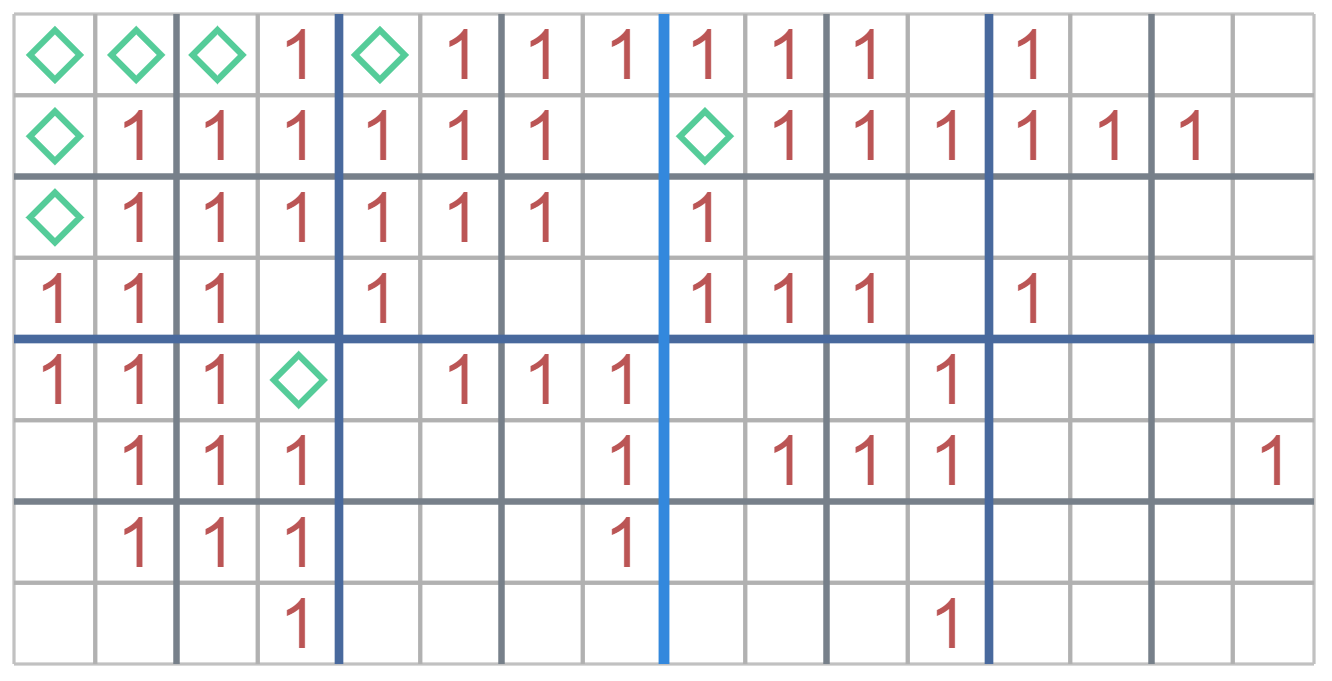}
    \caption{Caps in each of the 3 equivalence classes of $8$-caps.}
    \label{fig:8-caps-types}
\end{figure}

\subsection{Characterizing 8-caps}

In the proof of statement 3 of Theorem~{thm:cap-classes}, we showed that there are two equivalence classes of $8$-caps dimension $6$. In one case, the caps have a $6$-point dependence relation and in the other case, the caps have an $8$-point dependence relation. In both cases, these are the only dependencies in the caps.
In the following theorem, we say a bit more about the structure of caps in each class. This will be used later when we analyze classes of $9$-caps.

\begin{theorem}[Properties of 8-caps] \label{thm:8-classes}
Let $C$ be an $8$-cap of dimension $6$.
\begin{enumerate}
    \item If $C$ contains a $6$-point dependence, then there exists a $2$-point $p \in \exc(C)$.
    \item If $C$ contains an $8$-point dependence, then $C$ is complete and every $p \in \exc(C)$ is a $1$-point.
\end{enumerate}

\begin{proof}
Let $C = \{x_1, \ldots, x_8\}$.

Suppose $C$ contains a $6$-point dependence, say $x_1 + \cdots + x_6 = \vec{0}$. These $6$ points form a cap of dimension $4$, and by Corollary~\ref{cor:mult-2-or-higher} it follows that $C$ has an exclude point of multiplicity at least $2$. By Corollary~\ref{cor:s-geq-3k}, $C$ does not have enough elements to have an exclude point of multiplicity $3$ or higher, so $\exc(C)$ contains a $2$-point.

Suppose $C$ contains an $8$-point dependence, say $x_1 + \cdots + x_8 = \vec{0}$. Suppose there is a point $p \in \exc(C)$ with multiplicity greater than $1$. Relabeling if necessary, we can write $p = x_1 + x_2 + x_3 = x_4 + x_5 + x_6$. Then $x_1 + \cdots + x_6 = \vec{0}$. Adding this to the equation $x_1 + \cdots + x_8 = \vec{0}$ yields $x_{1} + x_{2} = \vec{0}$, so $x_{1} = x_{2}$, which is a contradiction. Thus every element of $\exc(C)$ is a $1$-point. But this means that

\[
    \lvert \qc(C) \rvert
    = \lvert C \rvert + \lvert \exc(C) \rvert
    = 8 + \binom{8}{3}
    = 8 + 56
    = 64.
\]
Since a $6$-flat has exactly $2^6 = 64$ points, if follows that $\qc(C) = \aff(C)$, so $C$ is complete in dimension $6$.  
\end{proof}
\end{theorem}

\begin{remark}
In the next subsection we show that there are $6$-dimensional $9$-caps, so the $6$-dimensional $8$-caps cannot be maximal.
\end{remark}

\subsection{Characterization of 9-caps}

We now move on to $9$-caps. There are $4$ equivalence classes of $9$-caps, starting in dimension $r_9 = 6$.

\begin{theorem}[Equivalence classes of 9-caps] \label{thm:9-classes}
There are $4$ equivalence classes of $9$-caps $C$:
\begin{enumerate}
    \item $9$-caps of dimension $8$, which are affinely independent.
    \item $9$-caps of dimension $7$ with an $8$-point dependence and only $1$-points in $\exc(C)$. 
    \item $9$-caps of dimension $7$ with a $6$-point dependence and at least one $2$-point in $\exc(C)$.
    \item $9$-caps of dimension $6$, which are maximal and have a $3$-point in $\exc(C)$. 
\end{enumerate}
\end{theorem}

\begin{proof}
Let $C$ be a $9$-cap. By Proposition~\ref{prop:complete-maximal-cap-size}, $r_9 \leq \dim(C) \leq 8$. Since $r_9 = 6$ by Corollary~\ref{cor:max-cap-dim-5}, the dimension of a $9$-cap can only be $6$, $7$, or $8$.

%\begin{description}

\textbf{Dimension 8:} By Theorem~ \ref{thm:k-1-equiv-classes} all $9$-caps of dimension $8$ are affinely independent and equivalent.

\textbf{Dimension 7:} By Theorem~\ref{thm:k-1-equiv-classes} there is an equivalence class of $7$-dimensional $9$-caps for each odd integer $2m+1$ with $5 \leq 2m+1 \leq 9-1 = 8$.
The applicable odd integers are $5$ and $7$, so there are two non-empty equivalence classes of $9$-caps of dimension $7$. In one class the caps have a dependence relation of $6$ points, and in the other the caps have a dependence relation of $8$ points.
Let $C = \{x_1, \ldots, x_9\}$ be a $9$-cap of dimension $7$.

\begin{description}
    \item[Case 1] Suppose $C$ contains a $8$-point dependence. Relabeling if necessary, we can write it as $x_1 + \cdots + x_8 = \vec{0}$. Then these points form a $6$-dimensional $8$-cap $D$ with an $8$-point dependence relation. By Theorem~\ref{thm:8-classes}, $D$ completes a $6$-flat $F$, and every point in $\exc(D)$ has multiplicity $1$. It follows that $x_9 \in C - D$ must be outside $F$.
    
    Now, suppose there is $p \in \exc(C)$ with multiplicity at least $2$. Then $p$ is the sum of two different triples, one of which contains $x_9$, since elements of $\exc(D)$ are $1$-points. Relabeling if necessary, we can write $p = x_1 + x_2 + x_3 = x_4 + x_5 + x_9$.
    Then $x_{1} + \cdots + x_{5} + x_9 = \vec{0}$. Adding this to the equation $x_1 + \cdots + x_8 = \vec{0}$ yields $x_6 + x_7 + x_8 + x_9 = \vec{0}$, which contradicts the fact that $C$ is a cap. Thus every element of $\exc(C)$ is a $1$-point.

    \item[Case 2] Suppose $C$ contains a $6$-point dependence. Relabeling if necessary, we can write it as $x_1 + \cdots + x_6 = \vec{0}$. Then these points form a $4$-dimensional $6$-cap. By Corollary~\ref{cor:mult-2-or-higher} it follows that there is a point in $\exc(C)$ with multiplicity $2$ or higher. 
    
    Suppose there exists $p \in \exc(C)$ with multiplicity at least $3$.  
    Then
    \[
        p
        = a_1 + b_1 + c_1
        = a_2 + b_2 + c_2
        = a_3 + b_3 + c_3,
    \]
    for nine distinct elements, and hence all elements, of $C$.
    It follows that $c_2 = a_1 + b_1 + c_1 + a_2 + b_2$ and $c_3 = a_1 + b_1 + c_1 + a_3 + b_3$. Since $c_2$ and $c_3$ are each sums of an odd number of elements, we have $c_2, c_3 \in \aff \{a_1, b_1, c_1, a_2, b_2, a_3, b_3 \}$. Therefore $C$ is spanned by $7$ elements, so $\dim(C) \leq 6$, contradicting the assumption that $\dim(C) = 7$. Therefore $\exc(C)$ cannot contain a $3$-point, so it must contain a $2$-point.
\end{description}

\textbf{Dimension 6:} Let $C = \{x_1, \ldots, x_9\}$ be a $9$-cap of dimension $6$, and let $F = \aff(C)$ be the $6$-flat spanned by $C$. Then $C$ contains an affinely independent subset, say, $C' = \{x_1, \ldots, x_7\}$, and $x_8$ and $x_9$ are each affine combinations of elements of $C'$. It follows that each of $x_8$ and $x_9$ is a sum of either $5$ or $7$ of points in $C'$. We will show that neither can be the sum of $7$ points in $C'$.

Suppose one of them is a sum of $7$ points --- for instance, that $x_8 = x_1 + \cdots + x_7$. Then $D =\{x_1, \ldots, x_8\}$ is an $8$-cap of dimension $6$ with an $8$-point dependence relation. By Theorem~\ref{thm:8-classes} we know $D$ is a complete cap in dimension $6$, so the cap cannot be expanded in dimension $6$ to include $x_9$. This is a contradiction. Therefore both $x_8$ and $x_9$ must be sums of $5$ elements of $C'$.

Relabeling if necessary, we can write $x_8 = x_1 + \cdots + x_5$. We claim that $x_9$ is the sum of $x_6$ and $x_7$ and three additional elements of $C'$.

Note that the $5$-point sum for $x_9$ must involve at least one of $x_6$ and $x_7$, since $x_9 \neq x_8 = x_1 + \cdots + x_5$. Suppose the $5$-point sum for $x_9$ involves $x_6$ but not $x_7$. Then without loss of generality, $x_9 = x_1 + \cdots + x_4 + x_6$. Adding this to $x_8 = x_1 + \cdots + x_5$ and simplifying, we obtain $x_9 + x_8+ x_5 + x_6 = \vec{0}$, which contradicts the fact that $C$ is a cap. Hence the sum must involve both $x_6$ and $x_7$. 

Relabeling if necessary, we can write $x_9 = x_1 + x_2 + x_3 + x_6 + x_7$.

We now show that $C$ completes the $6$-flat $F$. Since $x_8 = x_1 + \cdots + x_5$ we know that $\{x_1, \ldots, x_5, x_8\}$ is a $6$-cap contained in a $4$-flat $F_1 \subseteq F$. By Theorem~\ref{thm:cap-classes} this $6$-cap is complete in $F_1$, so the other points in $C$ must lie outside of $F_1$. Let $\{F_1, F_2, F_3, F_4\}$ be a partition of the $F$ into pairwise disjoint $4$-flats. By Proposition~\ref{prop:1-per-4-flat}, we know that each of $F_2, F_3, F_4$ can contain at most one point of $C$.  Then $x_6, x_7$, and $x_9$ must each lie in a different $4$-flat, one of $F_2, F_3, F_4$. Since no other points can be added to $C$ to form a larger cap, $C$ must be complete.  

To show that $\exc(C)$ has a point of multiplicity $3$, observe that the equations
\begin{align*}
    x_8 &= x_1 + x_2 + x_3 + x_4 + x_5, \\[1ex]
    x_9 &= x_1 + x_2 + x_3 + x_6 + x_7
\end{align*}
imply that
\[
    x_1 + x_2 + x_3 = x_4 + x_5 + x_8 = x_6 + x_7 + x_9.
\]
so $p = x_1 + x_2 + x_3$ has multiplicity at least $3$. But by Corollary~\ref{cor:s-geq-3k}, $C$ does not have enough elements for there to be a point in $\exc(C)$ with multiplicity $4$ or higher, so we conclude that $p$ is a $3$-point.

To show that all $6$-dimensional $9$-caps are equivalent, let $D$ be another $9$-cap of dimension $6$. By our work above, we know we can write $D = \{y_1, \ldots, y_9\}$ where:
\begin{itemize}
    \item $D' = \{y_1, \ldots, y_7\}$ is affinely independent;
    \item $y_8 = y_1 + \cdots + y_5$;
    \item $y_9 = y_1 + y_2 + y_3 + y_6 + y_7$.
\end{itemize}
Since $C' = \{x_1, \ldots, x_7\}$ and $D'$ are affinely independent subsets of the same size, there is an affine isomorphism $A : \mathbb{Z}_2^n \to \mathbb{Z}_2^n$ such that $A(x_i) = y_i$ for $1 \leq i \leq 7$. Since $A$ preserves affine combinations, it follows that $A(x_8) = y_8$ and $A(x_9) = y_9$ as well. Therefore $C \cong D$ under the affine isomorphism $A$. 

Now that we know that all $9$-caps of dimension $6$ are equivalent, they must all be complete caps, so by Proposition~\ref{prop:complete-maximal-cap-size} they are all maximal.

\end{proof}

\begin{example}
In Figure~\ref{fig:9-caps-types} we give examples of each of the four equivalence classes of $9$-caps. There is one of dimension $6$, two of dimension $7$, and one of dimension $8$.

\begin{figure}[htbp]
    \centering
    \includegraphics[scale=0.33]{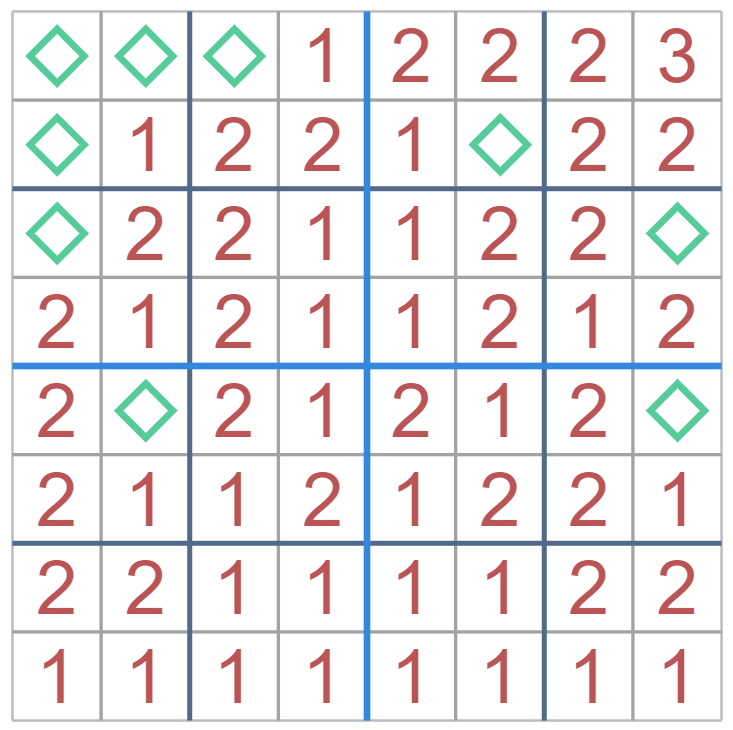}
    
    \bigskip
    
    \includegraphics[scale=0.33]{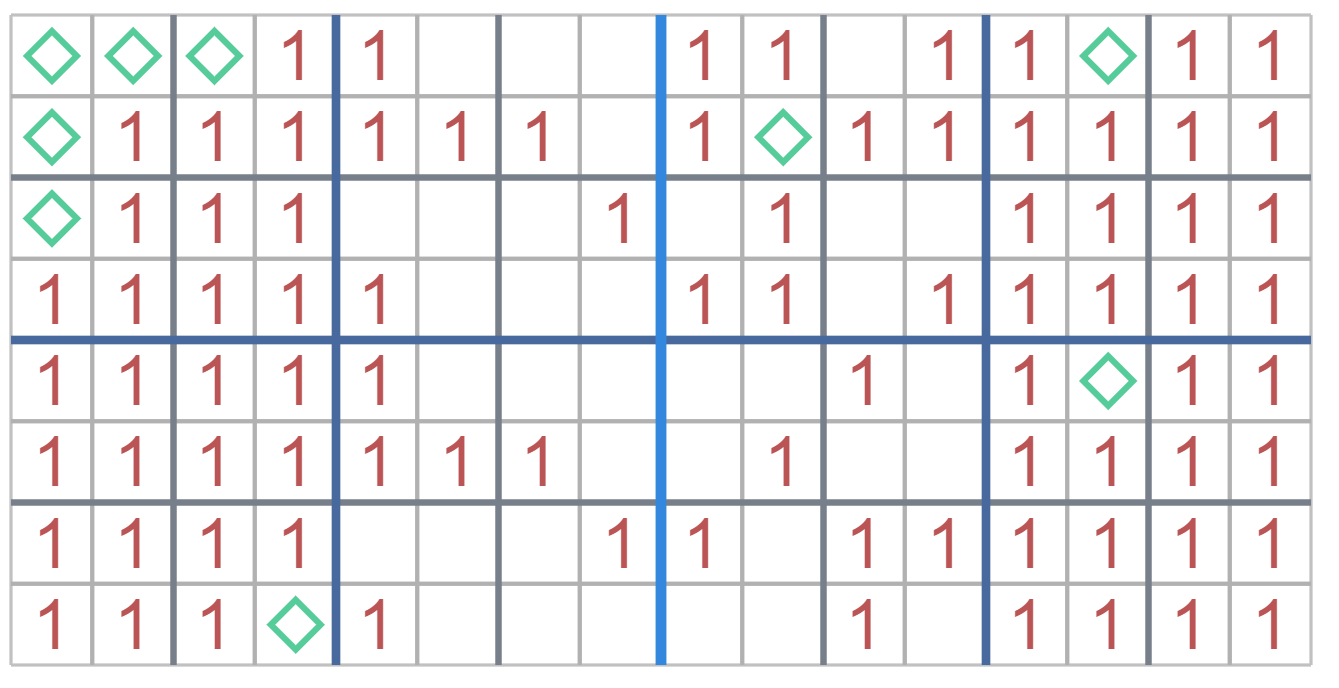}
    
    \bigskip

    \includegraphics[scale=0.33]{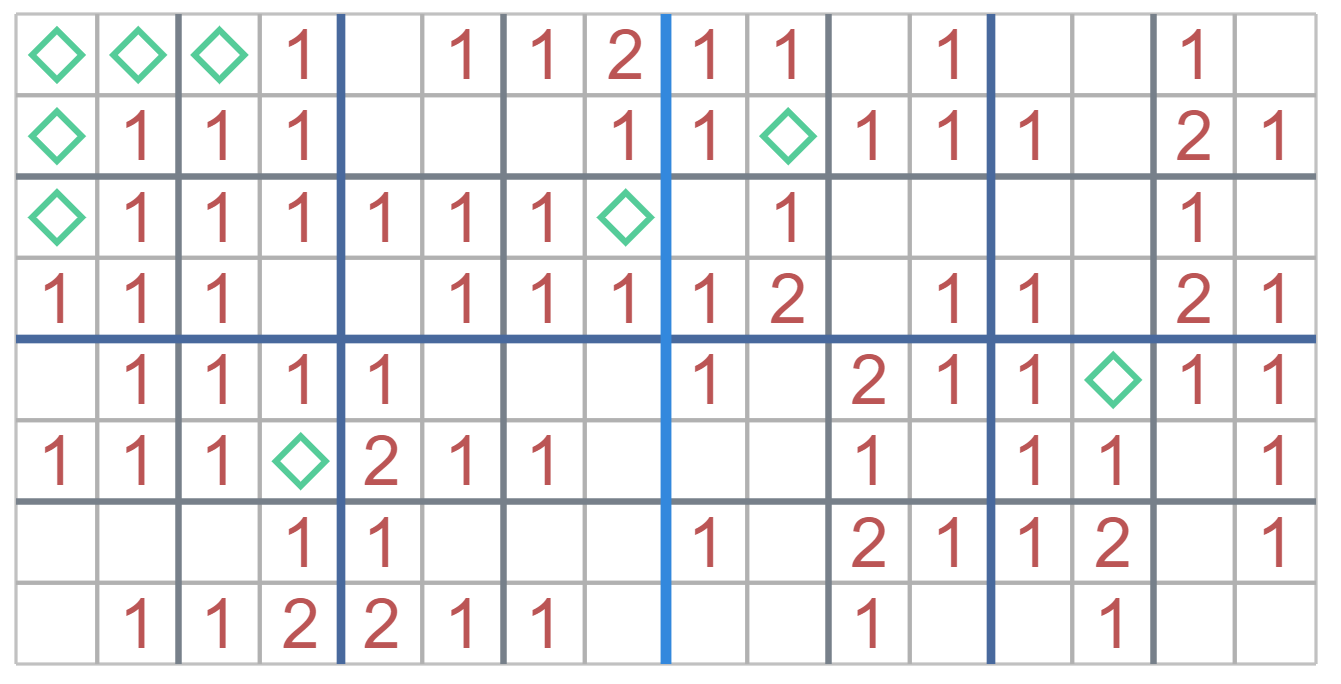}
    
    \bigskip
    
    \includegraphics[scale=0.43]{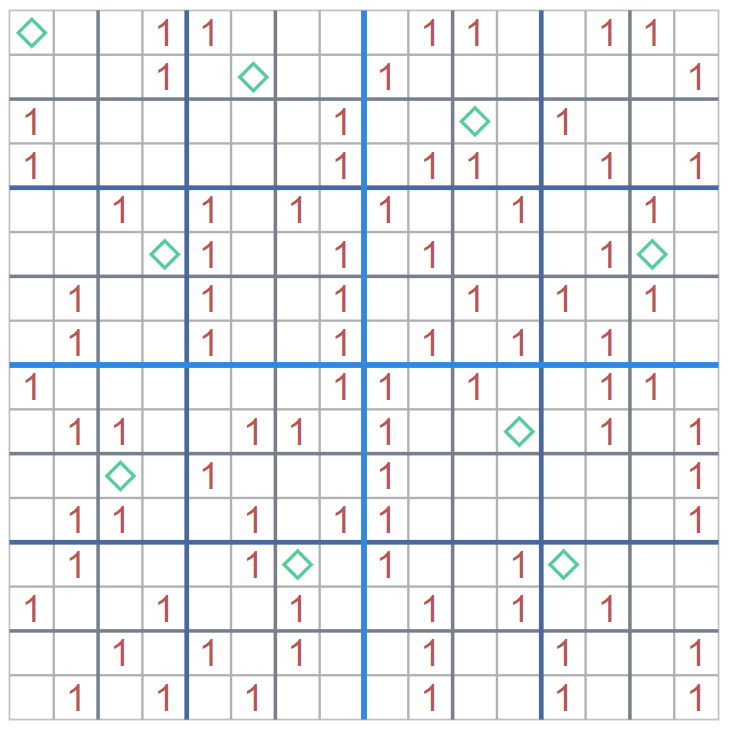}
    \caption{Examples of each of the 4 equivalence classes of $9$-caps.}
    \label{fig:9-caps-types}
\end{figure}
\end{example}

\begin{corollary} \label{cor:max-cap-dim-6}
The size of a maximal cap in dimension $6$ is $M(6) = 9$, and the smallest dimension containing a $10$-cap is $r_{10} = 7$.

\begin{proof}
This follows immediately from Proposition~\ref{prop:complete-maximal-cap-size}, Theorem~\ref{thm:9-classes}, and the example in Figure~\ref{fig:10-cap-dim-7} of a $10$-cap of dimension $7$.
\end{proof}
\end{corollary}

\begin{figure}[htbp]
    \centering
    \includegraphics[width=6cm]{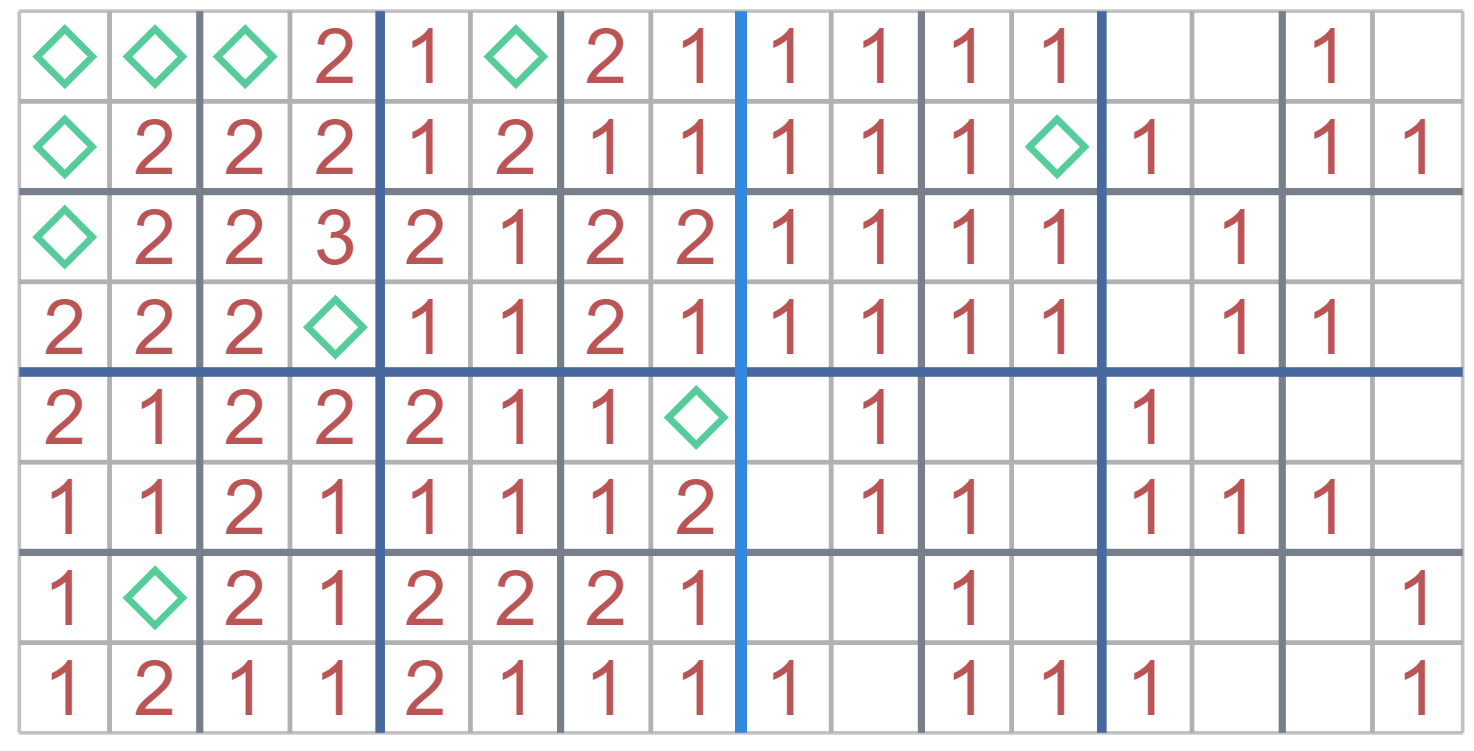}
    \caption{A $10$-cap in $\mathbb{Z}_2^7$.}
    \label{fig:10-cap-dim-7}
\end{figure}

Corollary~\ref{cor:max-cap-dim-6} implies that there cannot be a $k$-cap with $k \geq 10$ in $AG(6,2)$. Therefore any 10 cards in Quad-64 must contain a quad!

\section{Counting Caps} \label{sec:countingcaps}

In this section we use the results of the previous section to derive explicit formulas for the number of $k$-caps of each dimension for $k \leq 9$. As noted previously, this includes all possible caps in Quad-64.  We can then use this to compute the probability of a quad in a random initial $k$-card layout in Quad-16, Quad-32 and Quad-64. We will then be equipped to answer the burning question: 
 
\begin{quote}
\large{\textit{How many cards should you lay out
in a game of {EvenQuads}?}}
\end{quote}

\begin{definition}
Let $Q(n,k)$ denote the number of $k$-caps in $AG(n,2)$, and let $Q_{r}(n,k)$ denote the number of such $k$-caps of dimension $r$.
\end{definition}

Recall that $M(r)$ is the size of a maximal cap in dimension $r$, and observe that if $M(r) < k$ then $Q_{r}(n,k) = 0$. Recall also that $r_k$ is the smallest dimension of a flat that can contain a $k$-cap, so $r_k$ is the smallest value of $r$ for which $M(r) \geq k$. Note also that $r_k$ is at most $k-1$.

\begin{lemma}
If $n \geq r_k$, then
\[
    Q(n, k) = \sum_{r=r_k}^{k-1} Q_{r}(n, k).
\]
Otherwise $Q(n, k) = 0$.

\begin{proof}
The maximal dimension of the affine span of a $k$-cap is $k-1$, and $r_k$ is the minimal dimension for a $k$-cap to exist.
\end{proof}
\end{lemma}

Combined with our work in the previous section classifying equivalence classes of caps, we have the following proposition, which follows immediately from Theorems~\ref{thm:cap-classes} and~\ref{thm:9-classes}.

\begin{proposition} \label{prop:counting-cases}
For $1 \leq 9 \leq k$, we have
\[
    Q(k,n) =
    \begin{cases}
        Q_{k-1}(n,k) & \text{if $1 \leq k \leq 5$}, \\
        Q_{k-1}(n,k) + Q_{k-2}(n,k) & \text{if $6 \leq k \leq 8$}, \\
        Q_{k-1}(n,k) + Q_{k-2}(n,k) + Q_{k-3}(n,k) & \text{if $k=9$}.
    \end{cases}
\]

\end{proposition}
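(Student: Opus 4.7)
The plan is to apply the lemma immediately preceding the proposition, which states that when $r_k \leq n$ one has $Q(k,n) = \sum_{r=r_k}^{k-1} Q_{r}(k,n)$ (and otherwise $Q(k,n)=0$, in which case every $Q_r(k,n)$ on the right-hand side vanishes too, so the asserted equalities still hold trivially). Given this, the only thing to check is that the number of summands, $k - r_k$, equals $1$ for $1 \leq k \leq 5$, equals $2$ for $6 \leq k \leq 8$, and equals $3$ for $k = 9$. Equivalently, I need to read off the value of $r_k$ in each of these cases and verify it matches $k-1$, $k-2$, or $k-3$ respectively.

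Each value of $r_k$ has already been determined earlier in the paper and collected in Table~\ref{tab:k-cap-min-dims}. Specifically, for $1 \leq k \leq 5$, Theorem~\ref{thm:affine-ind} gives $r_k = k-1$, so only the top term $Q_{k-1}(k,n)$ contributes. For $k = 6$ and $k = 7$, Corollary~\ref{cor:6-7-cap-dim} gives $r_6 = 4 = k-2$ and $r_7 = 5 = k-2$. For $k = 8$, Corollary~\ref{cor:max-cap-dim-5} gives $r_8 = 6 = k-2$. For $k = 9$, the same corollary yields $M(5) = 7 < 9$, ruling out dimensions at most $5$, while Theorem~\ref{thm:9-classes} exhibits a $9$-cap of dimension exactly $6$; together these force $r_9 = 6 = k-3$. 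In each case one then unpacks the sum from the preceding lemma into the one, two, or three terms displayed in the proposition statement.

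There is essentially no obstacle: the proposition is a bookkeeping consequence of the preceding lemma together with the tabulated values of $r_k$. The only subtle point is noticing that one must not include terms $Q_r(k,n)$ with $r < r_k$, but by the definition of $r_k$ every such term is $0$, so either convention gives the same answer. Thus the proof is a short case analysis: in each of the three regimes, substitute the known value of $r_k$ into the formula $Q(k,n) = \sum_{r=r_k}^{k-1} Q_r(k,n)$ and record the resulting sum.
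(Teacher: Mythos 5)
Your proof is correct and matches the paper's approach: the paper likewise reduces the claim to the preceding lemma together with the classification of possible cap dimensions (citing Theorems~\ref{thm:1-5-cap-classes} through~\ref{thm:9-classes}), which is exactly the information you extract via the values of $r_k$. Your handling of $k=9$ is in fact slightly more careful than the paper's, since you derive $r_9 = 6$ from $M(5)=7$ and the existence of a $6$-dimensional $9$-cap rather than quoting the (misstated) value $r_9=7$ in Corollary~\ref{cor:max-cap-dim-5}.
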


\subsection{Cap Counting Formulas}
We will now derive formulas for the number of $k$-caps of dimensions $k-1$ and $k-2$ for any $k$, and for the number of $9$-caps of dimension $6$. By Proposition~\ref{prop:counting-cases}, this is all that is required to count the number of $k$-caps with $1 \leq k \leq 9$.

Recall that all $k$-caps of dimension $k-1$ are affinely independent, and by Proposition~\ref{thm:ind-implies-cap} all affinely independent sets of size $k$ are $k$-caps of dimension $k-1$). We now give a general formula for the number of affinely independent $k$-caps. We assume throughout that $n \geq r_k$.

\begin{theorem} \label{thm:k-1-dim-count}
 When $k=1$, $Q_{0}(1,n) = 2^n$, and for $k \geq 2$,
\begin{align*}
    Q_{k-1}(n,k)
    &= \frac{2^n}{k!} \cdot \prod_{i=0}^{k-2} (2^n - 2^i) \\[1ex]
    &= \frac{2^n (2^n-1) (2^n-2) (2^n-4) \cdots (2^n-2^{k-2})}{k!}
\end{align*}

\begin{proof}
First note that all singleton sets are affinely independent, so $Q_0(n,1) = 2^n$. We prove that the above formula holds for all $k \geq 2$ by induction on $k$.

Since all subsets of size $k=2$ are affinely independent,
\[
    Q_1(n,2)
    = \binom{2^n}{2}
    = \frac{2^n(2^n-1)}{2!},
\]
which matches the desired formula.

Now suppose $k \geq 2$ is an integer and the above formula is true for all affinely independent $k$-caps. Any affinely independent $(k+1)$-cap can be built from an affinely independent $k$-cap $C$, so we just need to count the number of ways we can add a point $x$ to $C$ so that $C \cup \{x\}$ is affinely independent. Recall that $\aff(C)$ contains $2^{k-1}$ points. The only restriction is that $x \notin \aff(C)$, so $x$ can be any of the $2^n - 2^{k-1}$ remaining points in $\mathbb{Z}_2^n$. For any $(k+1)$-cap there are $k+1$ distinct ways to view it as the union of a $k$-cap and an additional point, so this method of constructing $(k+1)$-caps from $k$-caps will produce each distinct $(k+1)$-cap exactly $k+1$ times. It follows that
\begin{align*}
    Q_k(n,k+1)
    &= Q_{k-1}(n,k) \cdot \frac{2^n-2^{k-1}}{k+1} \\[1ex]
    &= \left[ \frac{2^n}{k!} \cdot \prod_{i=0}^{k-2} (2^n - 2^{i}) \right] \cdot \frac{2^n-2^{k-1}}{k+1} \\[1ex]
    &= \frac{2^n}{(k+1)!} \cdot \prod_{i=0}^{k-1} (2^n - 2^{i}).
\end{align*}
Hence by induction, the formula is true for all $k \geq 2$.
\end{proof}
\end{theorem}

Now we count the number of $k$-caps of dimension $k-2$.

\begin{theorem} \label{thm:k-2-dim-count}
For $k \geq 6$, the number of $k$-caps of dimension $k-2$ is given by
\begin{align*}
    Q_{k-2}(n,k)
    &= Q_{k-2}(n,k-1) \cdot \sum_{i=2}^h \frac{1}{2i+2} \cdot \binom{k-1}{2i+1} \\[1ex]
    &= Q_{k-2}(n,k-1) \cdot
        \left[ \frac{1}{6} \binom{k-1}{5} + \frac{1}{8} \binom{k-1}{7} + \cdots + \frac{1}{2h+2} \binom{k-1}{2h+1} \right]
\end{align*}
where $h = \floor*{\frac{k-2}{2}}$.

\begin{proof}
Let $h = \floor*{\frac{k-2}{2}}$.
By Corollary~\ref{coro:unique-odd} and Theorem~\ref{thm:k-1-equiv-classes}, the set of $k$-caps of dimension $k-2$ can be partitioned into non-empty equivalence classes $\fanc_m$, one for each odd integer $2m + 1$ with $2 \leq m \leq h$. 
Hence
\[
    Q_{k-2}(n,k) = \sum_{m=2}^h \lvert \fanc_{m} \rvert,
\]

Let $m$ be an integer with $2 \leq m \leq h$.
Let $C' = \{x_1, \ldots, x_{k-1}\}$ be a $(k-1)$-cap of dimension $k-2$, which means it is affinely independent, let $S \subseteq C'$ be any $(2m+1)$-element subset, and let $x_k$ be the sum of the elements of $S$.
Note that the $x_k$ cannot be in $\exc(C')$ because $2m+1 > 3$.
Therefore the union $C = C' \cup \{x_k\}$ is a $k$-cap of dimension $k-2$.
Conversely, the proof of Theorem~\ref{thm:k-1-equiv-classes} demonstrates that every $k$-cap of dimension $k-2$ can be constructed in this way.
Since $Q_{k-2}(n,k-1)$ is the number of $(k-2)$-dimensional $(k-1)$-caps and $\binom{k-1}{2m+1}$ is the number of $(2m+1)$-element subsets, there are $Q_{k-2}(n,k-1) \cdot \binom{k-1}{2m+1}$ ways to perform this construction, and the results cover every possible $(k-2)$-dimensional $k$-cap at least once.
We will now show that each such cap occurs exactly $2m+2$ times by this construction.

Suppose $C = C' \cup \{x_k\}$ is as above. Then
\begin{equation} \label{eq:aff-dep}
    x_k + \sum_{s \in S} s = \vec{0}
\end{equation}
is the only affine dependence in $C$. It follows that the only way to obtain the same cap $C$ from our construction is to start with a different independent set of the form $(C' - \{s_0\}) \cup \{x_k\}$ for some $s_0 \in S$, and choose $(S - \{s_0\}) \cup \{x_k\}$ as our $(2m+1)$-element subset, noting that by equation~\eqref{eq:aff-dep} we have
\[
    s_0 = x_k + \sum_{s \in S - \{s_0\}} s.
\]
There are $2m+1$ elements $s \in S$ from which to choose, and together with our original construction of $C$, this gives exactly $2m+2$ ways to construct $C$.

Therefore
\[
    \lvert \fanc_m \rvert = \frac{Q_{k-2}(n,k-1) \cdot \binom{k-1}{2 m + 1}}{2m+2}
\]
and hence
\begin{align*}
    Q_{k-2}(n,k)
    = \sum_{m = 2}^h \lvert \fanc_m \rvert
    &= \sum_{m = 2}^h \frac{Q_{k-2}(n,k-1) \cdot \binom{k-1}{2m+1}}{2m+2} \\[1ex]
    &= Q_{k-2}(n,k-1) \cdot \sum_{m = 2}^h \frac{1}{2m+2} \cdot \binom{k-1}{2m+1}.
\end{align*}
\end{proof}
\end{theorem}

\begin{theorem} \label{thm:count-9-caps-dim-6}
The number of $9$-caps of dimension $6$ in $\mathbb{Z}_2^n$ is $Q_6(n,9) = \frac{35}{9} \cdot Q_6(n,7)$.

\begin{proof}
Let $C$ be a $9$-cap of dimension $6$. By the proof of part (d) of Theorem~\ref{thm:9-classes} we can write $C = \{x_1,\ldots,x_9\}$ where $C' = \{x_1, \ldots, x_7\}$ is affinely independent, $x_8$ and $x_9$ are each the sum of 5 elements of $C'$, and they have 3 elements of $C'$ in common. For example
\begin{align*}
    x_8 &= x_1 + x_2 + x_3 + x_4 + x_5, \\
    x_9 &= x_1 + x_2 + x_3 + x_6 + x_7.
\end{align*}
is one such solution.

We can construct such a cap as follows. Start with an independent set $C'$ of size 7. There are $Q_{6}(n,7)$ ways to do this. Now choose the 3 elements in common when we write $x_8$ and $x_9$ as sums of 5 elements of $C'$. There are $\binom{7}{3} = 35$ ways to do this. Finally, $\frac{1}{2} \cdot \binom{4}{2} = 3$ is the number of ways to split the remaining four elements of $C'$ into two pairs, one each to complete the sums for $x_8$ and $x_9$, respectively. 

Hence there are $35 \cdot 3 \cdot Q_{6}(n,7) = 105 \cdot Q_{6}(n,7)$ ways construct a 9-cap, but we have overcounted. We will show that each cap occurs exactly $27$ times by this construction. It suffices to count how many different independent sets $C'$ we could have started with to produce the same cap, with the same dependence relations. 

Assume our cap has the form $C = C' \cup \{x_8, x_9\}$, where $C' = \{x_1, \ldots, x_7\}$ and
\begin{align*}
    x_8 &= x_1 + x_2 + x_3 + x_4 + x_5, \\
    x_9 &= x_1 + x_2 + x_3 + x_6 + x_7.
\end{align*}

Then we have a triple point described by
\begin{equation} \label{eq:3-point}
    x_1 + x_2 + x_3
    = x_4 + x_5 + x_8
    = x_6 + x_7 + x_9.
\end{equation}

This divides the elements of $C$ into three $3$-element subsets; call them $Y_1, Y_2, Y_3$. In order to construct an affinely independent subset of size 7, we can take at most 5 points from any pair of sets $Y_i, Y_j$ (otherwise we will have a dependence relation), and the remaining 2 will come from the third set $Y_k$. There are 3 ways to choose one of $Y_1,Y_2,Y_3$, and $\binom{3}{2} \cdot \binom{3}{2} = 9$ ways to choose two elements each from the other sets, so there are $3 \cdot 9 = 27$ distinct affinely independent $7$-element subsets in $C$.

Thus the number of $6$-dimensional $9$-caps is
\[
    Q_6(n,9)
    = \frac{105}{27} \cdot Q_6(n,7)
    = \frac{35}{9} \cdot Q_6(n,7).
\]
\end{proof}
\end{theorem}

\subsection{An application to card layouts in \textit{EvenQuads}}

Using the formulas developed in this section (Proposition~\ref{prop:counting-cases} and Theorems~\ref{thm:k-1-dim-count}, \ref{thm:k-2-dim-count}, and \ref{thm:count-9-caps-dim-6}), we can now count the number of caps of various sizes and dimensions in a deck of \textit{EvenQuads} cards. These are given in Table~\ref{tab:cap-count}.

The number of subsets of size $k$ in $\mathbb{Z}_2^6$ is $\binom{2^6}{k} = \binom{64}{k}$, so the probability that a random subset of size $k$ in $\mathbb{Z}_2^6$ is a cap is
\[
    P(\text{$k$ cards is a cap}) = \frac{Q(k,6)}{\binom{64}{k}}
\]
and the probability that it contains at least one quad is
\[
    P(\text{$k$ cards contain a quad})
    = 1 - \frac{Q(6,k)}{\binom{64}{k}}.
\]
Table~\ref{tab:quad-probs} displays these computations for $k=1, \ldots, 10$.

\begin{table}[htbp]
\centering
\caption{Values of $Q_r(6,k)$, the number of $r$-dimensional caps of size $k$ in $AG(6,2)$.}
\label{tab:cap-count}
\[
\begin{array}{|c|c|c|c|c|}
    \hline
    k & Q_{k-1}(6,k) & Q_{k-2}(6,k) & Q_{k-3}(6,k) & Q(6,k) \\
    \hline \hline
    1 &              64 &               0 &               0 & 64
    \\ \hline
    2 &         2,\!016 &               0 &               0 & 2,\!016
    \\ \hline
    3 &        41,\!664 &               0 &               0 & 41,\!664
    \\ \hline
    4 &       624,\!960 &               0 &               0 & 624,\!960
    \\ \hline
    5 &   6,\!999,\!552 &               0 &               0 & 6,\!999,\!552
    \\ \hline
    6 &  55,\!996,\!416 &   1,\!166,\!592 &               0 & 57,\!163,\!008
    \\ \hline
    7 & 255,\!983,\!616 &  55,\!996,\!416 &               0 & 311,\!980,\!032
    \\ \hline
    8 &               0 & 927,\!940,\!608 &               0 & 927,\!940,\!608
    \\ \hline
    9 &               0 &               0 & 995,\!491,\!840 & 995,\!491,\!840
    \\ \hline
    10 & 0 & 0 & 0 & 0 \\ \hline
\end{array}
\]
\end{table}

\begin{table}[htbp]
\centering
\caption{The probability of a quad in a random layout of \textit{EvenQuads} cards.}
\label{tab:quad-probs}
\[
\begin{array}{|c|c|c|}
    \hline
    k & \text{Probability of no quads in $k$ cards} & \text{Probability of a quad in $k$ cards} \\
    \hline \hline
    1 & 1 & 0 \\
    \hline
    2 & 1 & 0 \\
    \hline
    3 & 1 & 0 \\
    \hline
    4 & 0.9836065574 & 0.01639344262 \\
    \hline
    5 & 0.9180327869 & 0.08196721311 \\
    \hline
    6 & 0.7624340094 & 0.2375659906 \\
    \hline
    7 & 0.5022084679 & 0.4977915321 \\
    \hline
    8 & 0.2096488791 & 0.7903511209 \\
    \hline
    9 & 0.03614635846 & 0.9638536415 \\
    \hline
    10 & 0 & 1 \\
    \hline
\end{array}
\]
\end{table}

The official instructions for \textit{EvenQuads} given at \href{https://awm-math.org/publications/playing-cards/EvenQuads}/{https://awm-math.org/publications/playing-cards/EvenQuads/} suggest that players begin the game by laying out 8 cards. However, our calculations show the following:

\begin{itemize}
\item An $8$-card layout will contain a quad about $80\%$ of the time.
\item A $9$-card layout will contain a quad about $97\%$ of the time.
\item A $10$-card layout will always contain at least one quad.
\end{itemize}

Thus, to guarantee a quad, you should deal out $10$ cards. For a bit more of a challenge, we suggest starting with $9$ cards. Finally, for more seasoned players, starting with $8$ cards will make it more difficult to find quads, and will require adding another card about one-fifth of the time. 

A more detailed description of card layouts in Quad-16, Quad-32, Quad-64, and Quad-128 is forthcoming.

\printbibliography

\end{document}